\newcommand {\PP}{\mathbb{P}}
\newcommand{\HF}{\ensuremath{\mathrm{HF}}}
\newtheorem{theorem}{Theorem}[section]
\newtheorem*{theorem*}{Main Theorem}
\newtheorem{lemma}[theorem]{Lemma}
\newtheorem{proposition}[theorem]{Proposition}
\newtheorem{corollary}[theorem]{Corollary}
\newtheorem*{proposition*}{Proposition}
\theoremstyle{definition}
\newtheorem*{Oprobl}{Open Problems}
\newtheorem{definition}[theorem]{Definition}
\newtheorem{example}[theorem]{Example}
\DeclareMathOperator{\Ann}{Ann}
\DeclareMathOperator{\hess}{hess}
\DeclareMathOperator{\Hess}{Hess}
\DeclareMathOperator{\Hom}{Hom}
\DeclareMathOperator{\Soc}{Soc}
\DeclareMathOperator{\reg}{reg}
\DeclareMathOperator{\init}{in}
\newcommand{\qand}{\quad \mbox{and} \quad}
\newcommand{\qwith}{\quad \mbox{with} \quad}
\numberwithin{equation}{section}
\newcommand{\AS}[1]{
  {\color{magenta} Alexandra: #1}}
\newcommand{\na}[1]{
  {\color{blue} Nasrin: #1}}
\title[New families of AG algebras satisfying WLP]{New families of Artinian Gorenstein algebras with the weak Lefschetz property}
\author[]{Nasrin Altafi}
\address{Nasrin Altafi: Department of Mathematics, KTH Royal Institute of Technology, S-100 44 Stockholm, Sweden and Department of Mathematics, Queen's University, 505 Jeffery Hall, University Avenue, Queen's University, Kingston, Ontario, Canada K7L 3N6}
\email{nar3@queensu.ca}
 \author[]{Rodica Dinu}
 \address{Rodica Dinu: University of Konstanz, Fachbereich Mathematik und Statistik, Fach D 197 D-78457 Konstanz, Germany, and Simion Stoilow Institute of Mathematics of the Romanian Academy, Calea Grivitei 21, 010702, Bucharest, Romania}
  \email{rodica.dinu@uni-konstanz.de}
\author[]{Shreedevi K. Masuti}
\address{Shreedevi K. Masuti: Department of Mathematics, Indian Institute of Technology Dharwad, Permanent Campus, Chikkamalligwad,  Dharwad - 580011, Karnataka, India}
\email{shreedevi@iitdh.ac.in}
  \author[]{Rosa M.\ Mir\'o-Roig}
  \address{Rosa Maria Mir\'o-Roig: Facultat de
  Matem\`atiques i Inform\`atica, Universitat de Barcelona, Gran Via des les
  Corts Catalanes 585, 08007 Barcelona, Spain} \email{miro@ub.edu,  ORCID 0000-0003-1375-6547}
\author[]{Alexandra Seceleanu}
\address{Alexandra Seceleanu: Department of
  Mathematics, University of Nebraska-Lincoln, 203 Avery Hall, Lincoln, NE 68588, USA}
\email{aseceleanu@unl.edu}
\author[]{Nelly Villamizar}
\address{Nelly Villamizar: Department of Mathematics, Swansea University, Fabian Way, SA1 8EN, Swansea, UK
}
\email{n.y.villamizar@swansea.ac.uk}
\thanks{\hspace{-15pt}Altafi was supported by Swedish Research Council grant VR2021-00472, Dinu was supported by the Alexander von Humboldt Foundation and DFG grant nr 467575307,  Masuti is supported by CRG grant CRG/2022/007572 and MATRICS grant MTR/2022/000816 funded by ANRF,  Govt. of India, Mir\'o-Roig was partially supported by the grant PID2020-113674GB-I00,
Seceleanu was partially supported by NSF DMS–2101225 and DMS–2401482, Villamizar was partially supported by the EPSRC New Investigator Award EP/V012835/1.}
\theoremstyle{definition}
\begin{document}

\begin{abstract} We construct new 
 families of Artinian Gorenstein  graded $K$-algebras of arbitrary codimension having binomial Macaulay dual generators and satisfying the weak or the strong Lefschetz property. This is a companion paper to \cite{ADFMMSV}, which studies  codimension three algebras having binomial Macaulay dual generators in great depth, establishing in particular that they enjoy the strong Lefschetz property.
\end{abstract}

\maketitle

\date{}

\setcounter{tocdepth}{1}
\tableofcontents

\section{Introduction}


The main objective of this paper is to advance the understanding of a challenging question in commutative algebra: under what conditions does a graded Artinian Gorenstein (AG) algebra over a field $K$ satisfy the weak Lefschetz property (WLP) or the strong Lefschetz property (SLP)? These properties play a crucial role in the study of Artinian algebras, influencing their algebraic structure and leading to geometric and combinatorial applications; see \cite{LefscetzBook, MNtour, JMR} for overviews of the topic and \cite{stanley2, HMNW} for  crucial advances. 

Via Macaulay-Matlis duality, an AG graded \( K \)-algebra corresponds to a homogeneous polynomial, called the Macaulay dual generator. When this generator is a monomial, the AG algebra is a quotient of the polynomial ring by a monomial complete intersection. In this case, previous studies by Stanley  \cite{stanley2} and Watanabe \cite{watanabe} have shown that if $K$ has characteristic zero the resulting AG algebra satisfies the WLP and even the SLP. Along with this, AG algebras having monomial Macaulay dual generator satisfy several other desirable properties which we list below:
\begin{inparaenum}
\item The defining ideal is a monomial complete intersection;
    \item The minimal generators of the monomial complete intersection form a Gröbner basis;
    \item The minimal free resolutions for (monomial) complete intersections are given by the Koszul complex;
    \item (Monomial) complete intersections can be obtained by a construction termed doubling from 0-dimensional schemes in $\PP^{n-1}$.
\end{inparaenum}

The present paper and the companion paper \cite{ADFMMSV} investigate to what extent similar results hold for AG algebras having binomial Macaulay dual generator. In particular, given the list of properties enumerated above we ask the following questions motivating our investigations:

\begin{Oprobl}\label{oprobl}

\begin{enumerate}
\item {\em Characterization of complete intersections:} Under what conditions can an AG algebra with a binomial Macaulay dual generator be classified as a complete intersection? This classification could help simplify the study of these algebras and provide a clearer structural picture. It is a step towards the larger goal of characterizing Artinian complete intersections based on their Macaulay dual generators. 
\item {\em Strong and weak Lefschetz properties:} While codimension-three AG algebras with binomial Macaulay dual  generators are shown in \cite{ADFMMSV} to satisfy both WLP and SLP, determining broader conditions that guarantee these properties in higher codimensions remains an open problem.
\item {\em Bounds on the Sperner number and flat h-vectors:} The Sperner number, which reflects the maximal value of the Hilbert function of an AG algebra, and the flatness of its h-vector (i.e., the number of equal entries corresponding to the Sperner number) are critical aspects that influence the structure of the algebra and allow to transfer the Lefschetz properties from an AG algebra to some of its quotients.
\item {\em Doubling construction:} What criteria allow these algebras to be viewed as doublings, i.e., constructed as extensions of certain one-dimensional Cohen-Macaulay algebras? Understanding the doubling properties may provide a pathway to building higher-dimensional AG algebras having specific properties of interest. It further relates the properties of AG algebras to those of non-reduced point schemes in projective space.
\item {\em Minimal Generators and Free Resolutions:} Determining bounds on the minimal number of generators required to define these algebras and describing their minimal free resolutions are fundamental to understanding the presentation and homological properties of AG algebras with binomial Macaulay dual  generators. Understanding which algebras arise as doublings  may provide a pathway to determining their minimal free resolutions.
\end{enumerate}
\end{Oprobl}

 It is shown in \cite{ADFMMSV} that codimension three AG algebras having binomial Macaulay dual generators retain most of the  properties of monomial complete intersections, satisfying in particular both WLP and SLP. However, extending these results to higher codimensions has proven to be a complex task, as some binomially generated AG algebras in codimension four or greater fail to meet the WLP under similar conditions; see \cite[Example 3.5]{ADFMMSV}. In the current paper, we focus on identifying new families of AG algebras in higher codimensions that do satisfy the WLP or SLP, as well as investigating their structural characteristics.

Our main results can be summarized as follows:

\begin{theorem*}
    Let $K$ be a field of characteristic zero and let $a_i, b_i,c_i, m$ be non-negative integers. Let $A_F$ denote the AG algebra having Macaulay dual generator $F$. 
    \begin{enumerate}
    \item If $F=X_1^{a_1}\cdots X_n^{a_n}(X_1^{b_1}\cdots X_r^{b_r}-X_{r+1}^{b_{r+1}}\cdots X_n^{b_n})$ for some $1\leq r\leq n$ is homogeneous, i.\,e.\, $ b_1+\cdots+b_r= b_{r+1}+\cdots+b_n$, and the inequality $a_1> a_2+\cdots +a_n + b_{r+1}+\cdots +b_n$ holds, then $A_F$ satisfies the WLP.
    \item If $F=X_i^{a_i}(X_1^{b_1}-X_2^{b_2}X_3^{b_3}\cdots X_n^{b_n})$ for some $1\leq i\leq n$ with $\sum_{i=2}^nb_i=b_1$, then $A_F$  has the WLP.
    \item If $F=X_1^{c_1}X_2^{c_2}X_3^{c_3}\cdots X_n^{c_n}(X_1^m-X_2^m)$, then $A_F$ has the SLP.
    \item  If $F=X_1^{c_1}X_2^{c_2}X_3^{c_3}\cdots X_n^{c_n}(m_1-m_2)$ where $m_1$ and $m_2$ are monomials of the same degree in 3 variables $X_{i}, X_j,X_r$, then $A_F$ has the SLP.
    \end{enumerate}
\end{theorem*}

In \cref{s: background}, we discuss the tools necessary for our analysis, which include higher Hessian matrices as introduced by Watanabe \cite{w1} and Gondim--Zappal\`a \cite{GZ} and a criterion for WLP from \cite{ADFMMSV} originating in work concerning Lefschetz properties of AG algebras that are connected sums  by McDaniel--Iarrobino--Seceleanu  \cite{IMS}. Making use of these tools, we prove our main results in \cref{s: main}. To conclude, we re-emphasize the open problems proposed above in \cref{s: open}.  They reflect challenges and potential directions for future research applicable to all AG algebras, which we are optimistic can be settled in the setting proposed in this paper by leveraging the  binomial nature of the Macaulay dual generator. 

Although many of these questions are resolved in the codimension three case, they remain largely open in higher codimensions, with the only partial results available to date presented in this paper.  Our work lays the groundwork for advancing the themes proposed above and opens pathways for further investigation into the structural nuances of AG algebras with binomial Macaulay dual generators. Addressing these questions will deepen our understanding of AG algebras at large.

 \textbf{Acknowledgement.} The project started at the  meeting ``Women in Commutative Algebra II '' (WICA II)  at CIRM Trento, Italy, October 16--20, 2023.  The authors would like to thank the CIRM and the organizers for the invitation and financial support through the NSF grant  DMS–2324929.

\section{Background} \label{s: background}

Throughout this paper  $K$ will be a field of characteristic zero.
Given a standard graded Artinian $K$-algebra $A=R/I$ where $R=K[x_1,\dots,x_n]$ and $I$ is a homogeneous ideal of $R$,
we denote by $\HF_A\colon \mathbb{Z} \longrightarrow \mathbb{Z}$ with $\HF_A(j)=\dim _K[A]_j=\dim _K[R/I]_j$
the Hilbert function of $A$ and $H_A(t)=\sum _{i} \dim_K[A]_it^{i}$ its Hilbert series. Since $A$ is Artinian, its Hilbert function is
encoded in its \emph{$h$-vector} $h=(h_0,\dots ,h_d)$ where $h_i=\HF_A(i)>0$ and $d$ is the largest index with this property. The integer $d$ is called the \emph{socle degree of} $A$ or the {\em regularity} of $A$ and denoted $\reg(A)$.

 \subsection{Gorenstein algebras and Macaulay duality}

A standard graded Artinian $K$-algebra $A=R/I$  with socle degree $d$ is said to be  {\em Gorenstein} if its
socle $(0:m_A)$ is a one dimensional $K$-vector space. 
We sometimes refer to the number of variables of $R$ as the {\it codimension} of $A$.

We  recall the construction of the standard graded Artinian Gorenstein algebra $A_F$ with {\em Macaulay dual generator} a given form $F\in S=K[X_1,\dots,X_N]$. We denote by $R=K[x_1,\ldots,x_N]$ the ring of differential operators acting on the polynomial ring $S$, i.\,e.\ $x_i=\frac{\partial}{\partial X_i}$. Therefore $R$ acts on $S$ by differentiation. Given polynomials $p\in R$ and $G\in S$ we will denote by ${p\circ G}$ the differential operator $p\left(\frac{\partial}{\partial X_1}, \ldots, \frac{\partial}{\partial X_n}\right)$ applied to $G$. For a homogeneous polynomial $F \in S$, we define
\[
\Ann_R(F):=\{p\in R \mid p\circ F=0\}\subset R, \quad \text{ and }  \quad A_F=R/\Ann_R(F).
\]
The ring $A_F$ is a standard graded Artinian Gorenstein $K$-algebra (AG algebra, for short) and $F$ is called its {\em Macaulay dual generator}. It is worthwhile to point out that every standard graded Artinian Gorenstein $K$-algebra is of the form $A_F$ for some homogeneous polynomial $F$, in view of Macaulay's ``double annihilator theorem'', see for instance \cite[Lemma 2.12]{IK}. Moreover the degree of the Macaulay dual generator $F$ is equal to the re\-gu\-larity of $A_F$.


Next, we single out the largest value of the Hilbert function and the degree range where it is attained. If this degree range is relatively wide, the Hilbert function has a flat.
\begin{definition}
    Let $A$ be an AG algebra. We define the {\em Sperner number}  $S_A$ of $A$ as the largest value of its Hilbert function.
    
    Let $h=(h_0,\dots ,h_d)$ be the h-vector of $A$. We set $NS_A=|\{h_i \mid h_i=S_A\}|$
    and we will say that the h-vector of $A$ has a \it{flat} if $NS_A\ge 3$.
\end{definition}

\subsection{Lefschetz properties and Hessian matrices}
\begin{definition}
Let $A=R/I$ be a graded Artinian $K$-algebra. We say that $A$ has the {\em weak Lefschetz property} (WLP, for short)
if there is a linear form $\ell \in [A]_1$ such that, for all
integers $i\ge0$, the multiplication map
\[
\times \ell: [A]_{i}  \longrightarrow  [A]_{i+1}
\]
has maximal rank.
 In this case, the linear form $\ell$ is called a weak Lefschetz
element of $A$. If for the general form $\ell \in [A]_1$ and for an integer $j$ the
map $\times \ell:[A]_{j-1}  \longrightarrow  [A]_{j}$ does not have maximal rank, we will say that the ideal $I$ fails the WLP in
degree $j$.

$A$ has the {\em strong Lefschetz property} (SLP, for short) if there is a linear form $\ell \in [A]_1$ such that, for all
integers $i\ge0$ and $k\ge 1$, the multiplication map
\[
\times \ell^k: [A]_{i}  \longrightarrow  [A]_{i+k}
\]
has maximal rank.  Such an element $\ell$ is called a strong Lefschetz element for $A$.
\end{definition}

To determine whether an Artinian standard graded $K$-algebra $A$ has the WLP or SLP seems a simple problem of linear algebra, but it has proven to be extremely elusive and  much more work on this topic remains to be done, see \cite{JMR}.

We end this preliminary subsection with results due to Watanabe \cite{w1} and Gondim-Zappal\`a \cite{GZ}  which establish a useful connection between the failure of Lefschetz properties and the vanishing of higher order Hessians.

\begin{definition}
Let $F\in S = K[X_1, \dots  , X_n]$ be a homogeneous polynomial and let $ A_F = R/\Ann_R (F)$ be the AG algebra with Macaulay dual generator $F$. Fix  ordered
$K$-basis $$\mathcal{B} = \{w_i \mid 1\le i \le h_t:=\dim_K A_t \} \subset A_t \text{ and}$$
$$\mathcal{B'} = \{u_j \mid 1\le j \le h_s:=\dim_K A_s \} \subset A_s.$$
The $(t,s)$-th mixed (relative) {\em Hessian matrix} of $F$ with respect to $\mathcal{B}, \mathcal{B'}$ is  the $h_t \times h_s$ matrix:
$$
\Hess_F^{(s,t)}=((w_iu_j)\circ F)_{i,j}.$$
The  $(t,s)$-th mixed {\em Hessian of } $F$ {\em with respect to} $\mathcal{B}, \mathcal{B'}$ is
$$ \hess _F^{(s,t)}=\det (\Hess _F^{(s,t)}).
$$
Ween $s=t$ and $\mathcal{B}=\mathcal{B'}$ the notions above become 
the $t$-th (relative) {\em Hessian matrix} of $F$ with respect to $\mathcal{B}$, namely the $h_t \times h_t$ matrix:
$$
\Hess_F^t=((w_iw_j)\circ F)_{i,j}$$
and the $t$-th {\em Hessian of } $F$ {\em with respect to} $\mathcal{B}$, respectively, 
$$ \hess _F^t=\det (\Hess _F^t).
$$
\end{definition}

The 0-th Hessian is the polynomial $F$ itself and, in the case $\dim_K A_1=n$, the 1-st Hessian, with respect to the standard basis given by the variables $x_1, \ldots, x_n$, is the classical Hessian $\Hess_F^1=(\frac{\partial^2 F}{\partial X_i\partial X_j})_{i,j}$. The definition of
Hessians of order $t$ depends on the choice of a basis of $A_t$ but  the vanishing of the $t$-th Hessian is independent of this choice.
We denote the $t$-th Hessian evaluated at the point $(a_1,\dots ,a_n)$ by $\hess _F^t(a_1,\dots, a_n)$.

\begin{theorem}[{\cite[Theorem 4]{w1}, \cite[Theorem 3.1]{MW}, \cite[Theorem 2.4]{GZ}}] \label{watanabe}
Let $F\in S = K[X_1, \dots  , X_n]$ be a homogeneous polynomial of degree $d$ and let $ A_F = R/\Ann_R (F)$ be the
associated AG algebra. Consider a linear form $\ell=a_1x_1+\cdots +a_nx_n\in A_1$.
\begin{enumerate}
    \item  The multiplication map $\times \ell^{d-2t}:A_t\to A_{d-t}$ has full rank if and only if the $t$-th Hessian $\hess _F^t(a_1,\dots, a_n)\ne 0$. In particular, $\ell$ is a strong Lefschetz element of $A$ if and only if $\hess _F^t(a_1,\dots, a_n)\ne 0$ for $t=1,\dots,\lfloor d/2\rfloor$.
    \item  The multiplication map $\times \ell^{d-s-t}:A_s\to A_{d-t}$ has full rank if and only if the $(s,t)$-th mixed Hessian $\hess _F^{(s,t)}(a_1,\dots, a_n)\ne 0$. 
\end{enumerate}
\end{theorem}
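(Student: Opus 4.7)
The plan is to translate the multiplication maps appearing in the statement into bilinear forms via the Macaulay--Matlis pairing, and then explicitly identify the resulting matrices with the Hessian matrices evaluated at $(a_1,\dots,a_n)$. The starting point is that Gorenstein duality on $A_F$ furnishes, for each $t$, a perfect pairing
\[
A_t\times A_{d-t}\to K,\qquad (w,w')\mapsto (ww')\circ F,
\]
whose values lie in the socle $A_d\cong K$. This identifies $A_{d-t}$ with the $K$-linear dual of $A_t$.

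For part (1), I would compose $\times\ell^{d-2t}\colon A_t\to A_{d-t}$ with this identification to obtain the symmetric bilinear form
\[
B\colon A_t\times A_t\to K,\qquad B(w,w')=\bigl(\ell^{d-2t}ww'\bigr)\circ F.
\]
Since the pairing is perfect and $\dim_K A_t=\dim_K A_{d-t}$, the map $\times\ell^{d-2t}$ has full rank if and only if $B$ is nondegenerate. To compute the Gram matrix of $B$ in the basis $\mathcal{B}=\{w_i\}$, I would invoke the apolarity identity
\[
\ell^N\circ g = N!\cdot g(a_1,\dots,a_n)\qquad \text{for every homogeneous }g\in S_N,
\]
which is a direct consequence of the multinomial expansion of $\ell^N$. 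Applied to $g=(w_iw_j)\circ F\in S_{d-2t}$, this yields
\[
B(w_i,w_j)=(d-2t)!\cdot \bigl[(w_iw_j)\circ F\bigr](a_1,\dots,a_n),
\]
so the Gram matrix of $B$ is a nonzero scalar multiple of $\Hess_F^t$ evaluated at $(a_1,\dots,a_n)$. Nondegeneracy of $B$ is therefore equivalent to $\hess_F^t(a_1,\dots,a_n)\neq 0$, establishing the main equivalence of (1).

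For the SLP characterization, the forward direction is immediate. For the converse, assuming all $\times \ell^{d-2t}\colon A_t\to A_{d-t}$ are isomorphisms for $t\leq \lfloor d/2\rfloor$, I would establish that every power map has maximal rank via the following domino step: if $v\in A_i$ satisfies $\ell^k v=0$ with $2i+k\leq d$, then $\ell^{d-2i}v=\ell^{d-2i-k}(\ell^k v)=0$, and the isomorphism $\times \ell^{d-2i}$ forces $v=0$. This gives injectivity of $\times\ell^k$ in the lower half of degrees; surjectivity in the upper half follows by transposing this argument through the Gorenstein pairing, and unimodality of the Hilbert function (needed to interpret maximal rank correctly) comes for free since the case $k=1$ shows $\ell\colon A_i\to A_{i+1}$ is injective for $i<d/2$. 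Part (2) is proved analogously: replace $B$ by the bilinear form $(u,w)\mapsto (\ell^{d-s-t}uw)\circ F$ on $A_s\times A_t$, whose matrix in the chosen bases equals $(d-s-t)!\cdot \Hess_F^{(s,t)}$ at $(a_1,\dots,a_n)$, so that full rank of $\times \ell^{d-s-t}$ is equivalent to $\hess_F^{(s,t)}(a_1,\dots,a_n)\neq 0$.

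The main technical input I expect to isolate is the apolarity identity $\ell^N\circ g=N!\cdot g(a)$, which is the precise bridge between the ``evaluate at a point'' definition of the Hessian and the purely algebraic pairing on $A_F$; once this identification is in hand, everything else is bookkeeping. A subsidiary subtlety is the domino argument needed to pass from the isomorphism property for the symmetric maps $\ell^{d-2t}$ to the full strong Lefschetz property, which implicitly leverages the unimodality of the Hilbert function forced by the hypothesis.
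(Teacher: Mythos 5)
The paper offers no proof of this theorem---it is quoted verbatim from Watanabe, Maeno--Watanabe, and Gondim--Zappal\`a---so there is nothing internal to compare against; your argument is correct and is essentially the standard proof from those references. The three ingredients you use---the apolarity identity $\ell^N\circ g=N!\,g(a_1,\dots,a_n)$ (valid since $\operatorname{char}K=0$, as the paper assumes throughout), the identification via the perfect Macaulay pairing of the Gram matrix of $(w,w')\mapsto(\ell^{d-2t}ww')\circ F$ with $(d-2t)!\,\Hess_F^t$ evaluated at $(a_1,\dots,a_n)$, and the domino argument passing from the symmetric maps $\times\ell^{d-2t}$ to the full SLP---are all sound, with the only cosmetic caveat that in part (2) the mixed Hessian matrix need not be square, so ``$\hess_F^{(s,t)}\neq 0$'' must be read as maximal rank of that matrix, exactly as your bilinear-form formulation already does.
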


The theorem above can be used to establish the WLP as well as the SLP. Due to their duality properties, the WLP for AG algebras is determined by a single map.

\begin{proposition}[{\cite[Proposition 2.1]{MMN}}]\label{mid map}
    Let $A$ be an AG algebra with $\reg(A)=d$. Then the following are equivalent 
    \begin{itemize}
        \item[(i)] $\ell\in A_1$ is a Weak Lefschetz element on $A$ 
        \item[(ii)] the map $\times \ell: A_{\lfloor\frac{d-1}{2}\rfloor} \to A_{\lfloor\frac{d-1}{2}\rfloor+1}$ has maximum rank (is injective) 
        \item[(iii)] the map $\times \ell: A_{\lfloor\frac{d}{2}\rfloor} \to A_{\lfloor\frac{d}{2}\rfloor+1}$ has maximum rank (is surjective).
        \end{itemize}
\end{proposition}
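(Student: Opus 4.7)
My approach is to combine the Poincar\'e duality built into an AG algebra with the standard propagation principle for multiplication by a linear form in a graded Artinian algebra. The whole argument reduces to two structural observations that together force max rank in every degree once it holds in the middle.

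The two key preliminary facts I would record are as follows. \emph{Duality:} Since $A$ is AG with socle degree $d$, the multiplication pairing $A_i \times A_{d-i} \to A_d \cong K$ is perfect, so it identifies $A_{d-i}$ with $A_i^{*}$. Under this identification, an easy check shows that $\times \ell : A_i \to A_{i+1}$ is the transpose of $\times \ell : A_{d-i-1} \to A_{d-i}$; consequently, injectivity of the former is equivalent to surjectivity of the latter. \emph{Propagation:} If $\times \ell : A_i \to A_{i+1}$ is surjective, i.e.\ $A_{i+1} = \ell A_i$, then using that $A$ is generated in degree one one gets $A_{i+2} = A_1 A_{i+1} = \ell\, A_1 A_i \subseteq \ell A_{i+1}$, and by induction $\times \ell$ is surjective in every degree $\geq i$.

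With these in hand I would execute the equivalences. The implications (i)$\Rightarrow$(ii) and (i)$\Rightarrow$(iii) are immediate from the definition of a WLE. For (ii)$\Leftrightarrow$(iii), apply duality at $i = \lfloor (d-1)/2 \rfloor$; a short parity check shows $d - i - 1 = \lfloor d/2 \rfloor$ regardless of the parity of $d$, so the two maps are mutually dual and the equivalence is immediate. For (iii)$\Rightarrow$(i), propagation from $\lfloor d/2 \rfloor$ yields surjectivity of $\times \ell$ in every degree $j \geq \lfloor d/2 \rfloor$, while duality converts these surjectivities into injectivity of $\times \ell$ in every degree $i \leq d - \lfloor d/2 \rfloor - 1 = \lceil d/2 \rceil - 1$. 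A small case split on the parity of $d$ confirms that $\{0, \dots, \lceil d/2 \rceil -1\}$ and $\{\lfloor d/2 \rfloor, \dots, d-1\}$ together cover every degree where a multiplication map lives, and both injectivity and surjectivity individually imply maximum rank.

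The only steps that require genuine care are the parity case analysis above and the reconciliation of the parenthetical "(is injective)" / "(is surjective)" with the phrase "has maximum rank". Either injectivity or surjectivity already forces max rank, so the parenthetical remarks are consistent with the main claim; to justify them as descriptions of the max-rank condition, one uses the standard observation that the existence of a Lefschetz element forces the Hilbert function of $A$ to be unimodal, which (combined with the symmetry of an AG Hilbert function) places the critical dimensions in exactly the expected comparison. Apart from this bookkeeping, the argument is purely formal and I do not anticipate a serious obstacle.
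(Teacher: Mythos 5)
Your argument is correct: duality (the transpose relation between $\times\ell\colon A_i\to A_{i+1}$ and $\times\ell\colon A_{d-i-1}\to A_{d-i}$ coming from the perfect pairing $A_i\times A_{d-i}\to A_d$) plus the propagation of surjectivity in a standard graded algebra, with the parity bookkeeping checking out, is exactly the standard proof. The paper does not prove this proposition but cites \cite[Proposition 2.1]{MMN}, whose argument is essentially the one you give, so there is nothing further to compare.
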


As a consequence of \cref{watanabe} and \cref{mid map}, the WLP is determined by the non-vanishing of a single mixed Hessian determinant. 

\begin{corollary}\label{cor: hessian}
    Let $F$ be a homogeneous polynomial of degree $d$ and let $ A_F = R/\Ann_R (F)$ be the associated AG algebra. Set $t=\lfloor\frac{d}{2}\rfloor$. A linear form $\ell=a_1x_1+\cdots +a_nx_n\in A_1$ is a Weak Lefschetz element on $A$ if and only if 
     the (mixed) Hessian  $\hess _F^{(d-t-1,t)}(a_1,\dots, a_n)\ne 0$.
\end{corollary}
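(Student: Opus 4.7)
The plan is to chain together Proposition \ref{mid map} (which reduces the WLP criterion to a single multiplication map between adjacent middle degrees) and Theorem \ref{watanabe}(2) (which translates maximal rank of such a map into nonvanishing of a mixed Hessian), and then match the resulting indices to $(d-t-1,t)$ for $t=\lfloor d/2\rfloor$. Since both building blocks are already available, this is essentially an index-bookkeeping argument.

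Concretely, I would proceed as follows. First, by Proposition \ref{mid map}(ii), the form $\ell$ is a weak Lefschetz element on $A_F$ if and only if the multiplication map
\[
\times \ell \colon [A_F]_{\lfloor (d-1)/2\rfloor}\longrightarrow [A_F]_{\lfloor (d-1)/2\rfloor +1}
\]
has maximal rank. Set $s=\lfloor (d-1)/2\rfloor$ and $t'=d-s-1$, so that $d-t'=s+1$ and $d-s-t'=1$. In this notation the displayed map is exactly $\times \ell^{\,d-s-t'}\colon [A_F]_s\to [A_F]_{d-t'}$. Then, applying Theorem \ref{watanabe}(2) with this choice of $(s,t')$ shows that the map has full rank if and only if the mixed Hessian $\hess_F^{(s,t')}(a_1,\dots,a_n)\neq 0$.

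It remains to verify that $(s,t')=(d-t-1,t)$ with $t=\lfloor d/2\rfloor$, which I would do by a short case split on the parity of $d$: for $d=2k+1$ one has $s=k$, $t'=k$, $t=k$, $d-t-1=k$, so both pairs equal $(k,k)$; for $d=2k$ one has $s=k-1$, $t'=k$, $t=k$, $d-t-1=k-1$, so both pairs equal $(k-1,k)$. Combining the two reductions then yields the stated equivalence. The only mild subtlety, and the closest thing to an obstacle, is that $\hess_F^{(\cdot,\cdot)}$ is defined as a determinant and thus pertains to a square matrix; when the relevant Hilbert-function values differ, one reads the conclusion as "the mixed Hessian matrix has maximal rank," which is what Theorem \ref{watanabe}(2) actually produces. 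Since Proposition \ref{mid map} already identifies a single degree in which the multiplication map must have maximal rank, no further analysis is needed.
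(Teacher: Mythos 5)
Your proof is correct and follows exactly the route the paper intends: the corollary is stated there as an immediate consequence of \cref{mid map} and \cref{watanabe}(2), and your index bookkeeping (including the parity check identifying $(\lfloor(d-1)/2\rfloor,\,d-\lfloor(d-1)/2\rfloor-1)$ with $(d-t-1,t)$) is accurate. Your remark about reading the non-square mixed Hessian as having maximal rank is a reasonable and correct reading of the determinant convention.
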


\section{Algebras with binomial Macaulay dual generator that satisfy the WLP} \label{s: main}

Throughout this section, we fix a binomial $F=m_1-m_2\in K[X_1,\dots ,X_n]$ with $m_1, m_2$ monomials. After reordering the variables, if necessary, any binomial can be factored as 
\[
F=X_1^{a_1}\cdots X_n^{a_n}(X_1^{b_1}\cdots X_r^{b_r}-X_{r+1}^{b_{r+1}}\cdots X_n^{b_n}), \quad\text{ where } 1\le r\le n-1 \text{ and}
\]
\begin{align}\label{eq: binomial}
m_1 & =  X_1^{a_1+b_1}\cdots X_r^{a_r+b_r} X_{r+1}^{a_{r+1}}\cdots X_n^{a_n}, \nonumber \\
 m_2 & = X_1^{a_1}\cdots X_r^{a_r} X_{r+1}^{a_{r+1}+b_{r+1}}\cdots X_n^{a_n+b_n}, \nonumber \\
 g &=\gcd(m_1,m_2)  =   X_1^{a_1}\cdots X_n^{a_n} \nonumber \\
 d &= \sum _{i=1}^na_{i} + \sum_{i=1}^rb_{i} =  \sum _{i=1}^na_{i} + \sum_{i=r+1}^n b_{i}.
 \end{align}
 Our goal will be to describe the main features of the Artinian Gorenstein algebra $A_F$ with binomial Macaulay dual generator $F$ in terms of $a_i$ and $b_i$. 


\subsection{A criterion for WLP}
In \cite{ADFMMSV}, the authors prove that  codimension 3 AG algebras $A_F$ with binomial Macaulay dual generator $F$ enjoys the SLP, but even the WLP can fail for AG algebras of higher codimension with binomial Macaulay dual generator. The same paper gives a useful general result which we employ in our present work.

\begin{theorem}[{\cite[Theorem 3.4]{ADFMMSV}}]\label{thm: conn sum criterion}
  If $F=m_1-m_2$ is a homogeneous binomial such that $g=\gcd(m_1,m_2)$ satisfies  $\deg (g)<\left \lfloor\frac{\deg (F)-1}{2} \right \rfloor$, then $A_F$ has the WLP.
\end{theorem}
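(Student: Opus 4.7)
The natural approach is to decompose $A_F$ as a connected sum $A_{m_1}\#_{A_g}A_{m_2}$ of two monomial complete intersections and exploit the fact that when $\deg(g)$ is small, this connected sum behaves in the critical middle degrees like a direct sum. Since monomial complete intersections enjoy the SLP by Stanley--Watanabe, the WLP of $A_F$ would then follow from the middle-map criterion \cref{mid map}.

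\textbf{Step 1 (decomposition).} Invoke the connected sum criterion from the companion paper (equivalently \cite[Theorem 1]{IMS}): the hypothesis $\deg(g)<\lfloor(\deg(F)-1)/2\rfloor$ gives $2\deg(g)<\deg(F)$, so on degree grounds alone $g^2$ is not a multiple of either $m_1$ or $m_2$, and therefore $A_F\cong A_{m_1}\#_{A_g}A_{m_2}$ as graded $K$-algebras. Set $k=\deg(g)$, $d=\deg(F)$, and $s=\lfloor(d-1)/2\rfloor$, so $k\leq s-1$.

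\textbf{Step 2 (middle-degree identification).} Using the two short exact sequences
\[
0\to A_g(k-d)\to A_{m_1}\times_{A_g} A_{m_2}\to A_F \to 0,\qquad 0\to A_{m_1}\times_{A_g} A_{m_2}\to A_{m_1}\oplus A_{m_2}\to A_g\to 0
\]
associated to the connected sum, I would verify that in each degree $j\in\{s, s+1\}$ one has $(A_g)_j=0$ (since $j\geq s>k$) and $(A_g(k-d))_j=(A_g)_{j+k-d}=0$ (since $j+k\leq 2s\leq d-1<d$, which holds regardless of the parity of $d$). Hence $A_{F,j}\cong A_{m_1,j}\oplus A_{m_2,j}$, and this identification is compatible with the action of any linear form $\ell\in R$, since $\ell$ acts diagonally on the fiber product.

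\textbf{Step 3 (conclusion via WLP of monomial CIs).} Take $\ell=x_1+\cdots+x_n$, which is a strong Lefschetz element for every monomial complete intersection by Stanley--Watanabe. Each $A_{m_i}$ has a symmetric unimodal Hilbert function peaking at $d/2$, and since $s<d/2$ one has $\dim_K(A_{m_i})_s\leq\dim_K(A_{m_i})_{s+1}$, so the maximal-rank map $\times\ell\colon (A_{m_i})_s\to(A_{m_i})_{s+1}$ is injective. The direct sum of injections is injective, so under the identification from Step 2 the map $\times\ell\colon A_{F,s}\to A_{F,s+1}$ is injective, and by \cref{mid map}(ii), $\ell$ is a weak Lefschetz element on $A_F$.

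\textbf{Main obstacle.} The crux is Step 2: one must verify that the middle-degree identification $A_{F,j}\cong A_{m_1,j}\oplus A_{m_2,j}$ is an isomorphism of $R$-modules compatible with the action of $\ell$, not merely one of $K$-vector spaces. This hinges on the simultaneous vanishing of both correction terms $(A_g)_j$ and $(A_g(k-d))_j$ throughout the range $j\in\{s,s+1\}$, which is precisely where the quantitative hypothesis $\deg(g)<\lfloor(\deg(F)-1)/2\rfloor$ enters in an essential way. Once this compatibility is secured, the remainder of the argument is a short reduction to the classical Lefschetz theory of monomial complete intersections.
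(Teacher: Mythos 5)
Your argument is correct, and it follows the route the paper itself indicates: this result is only cited here from the companion paper \cite{ADFMMSV} and attributed to the connected-sum techniques of \cite{IMS}, which is exactly the decomposition $A_F\cong A_{m_1}\#_{A_g}A_{m_2}$ you use. Your degree bookkeeping in Step 2 (both $(A_g)_j$ and $(A_g(k-d))_j$ vanish for $j\in\{s,s+1\}$ precisely under the stated hypothesis) and the reduction to the Stanley--Watanabe SLP of monomial complete intersections are sound.
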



The result above is the best possible in the sense that there are examples of Artinian Gorenstein algebras $A_F$ with $\deg (g)\ge \left \lfloor\frac{\deg (F)-1}{2} \right \rfloor$ failing the WLP  \cite[Example 3.5]{ADFMMSV}. Our aim is to describe new families of binomials which in spite of verifying the inequality 
$\deg (g)\ge \left \lfloor\frac{\deg (F)-1}{2} \right \rfloor$
 do satisfy the WLP.

 The following lemma will be useful in describing some of these families. A more general result is presented in \cite{KU}. We include a short proof of this special case for completeness.

 \begin{lemma}\label{lem: socle fits}
     A surjective degree-preserving homomorphism $\pi:A\to B$ between AG algebras of the same socle degree is an isomorphism.
 \end{lemma}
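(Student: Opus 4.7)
The plan is to show that $\ker(\pi)$ must be zero by using the Gorenstein structure of $A$. Set $K := \ker(\pi)$, which is a graded ideal of $A$, and suppose for contradiction that $K \neq 0$.

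First I would isolate a nonzero homogeneous element of $K$ of maximal degree. Call this element $x$, of degree $e$; the existence of $e$ is guaranteed because $A$ is Artinian. By maximality, $\mathfrak{m}_A \cdot x \subseteq K$ has degree $>e$, and therefore must vanish. Hence $x$ lies in the socle $(0 :_A \mathfrak{m}_A)$ of $A$.

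Next I would invoke the Gorenstein hypothesis on $A$: its socle is one-dimensional and concentrated in the top degree, namely $\mathrm{Soc}(A) = A_d$. Consequently $e = d$ and, because $A_d$ is one-dimensional and $0 \neq x \in A_d \cap K$, we conclude $A_d \subseteq K$. Applying $\pi$ gives $B_d = \pi(A_d) = 0$, which contradicts the assumption that $B$ also has socle degree $d$ (so $B_d \neq 0$, being the one-dimensional socle of $B$). Therefore $K = 0$, and $\pi$ is injective as well as surjective, hence an isomorphism.

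No significant obstacle arises here; the only subtle point is the choice of the top-degree element in $K$, after which the one-dimensionality of the socle of the Gorenstein algebra $A$ does all the work.
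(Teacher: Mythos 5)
Your proof is correct and follows essentially the same strategy as the paper: derive a contradiction from the kernel containing the one-dimensional top-degree component $A_d$, which $\pi$ must carry isomorphically onto $B_d\neq 0$. The only (harmless) difference is how the socle element is produced --- the paper multiplies an arbitrary nonzero kernel element into the socle using the Gorenstein pairing, while you take a homogeneous kernel element of maximal degree and observe it is automatically annihilated by $\mathfrak{m}_A$; both routes are valid (though you may want to rename the kernel, since $K$ already denotes the ground field).
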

 \begin{proof}
Let $f\in \ker(\pi)$ and assume that $f\neq 0$. Then since $A$ is AG, there exists $g\in A$ so that $fg\in \Soc(A)$ and $fg\neq 0$. Suppose $\reg(A)=\reg(B)=d$. Since $\pi$ is degree-preserving and $A,B$ are AG, $\pi$ induces an isomorphism $\pi:A_d\xrightarrow{\cong} B_d$. Therefore, since $0\neq fg\in A_d$ we conclude  $0\neq \pi(fg)=\pi(f)\pi(g)$. This is a contradiction, since $\pi(f)=0$. Thus, $\ker(\pi)=0$ and so $\pi$ is an isomorphism.
 \end{proof}

 \subsection{New families of algebras satisfying the WLP}

 We are now ready to analyze the WLP for various families of AG algebras with binomial Macaulay dual generator. In terms of the notation introduced in \eqref{eq: binomial} our families will have one of the following properties
 \begin{enumerate}
    \item There is a variable in the support of $g$ that dominates, that is, its exponent exceeds the exponent of all the other variables combined. We analyze this case in \cref{family4}, where we note that in addition to satisfying WLP the algebras in this family also have a flat.
     \item The gcd $g$ of the two monomials in $F$ is a pure power of a single variable. We analyze this case in \cref{family1} and \cref{family2}.
     \item The support of the binomial $F/g$ contains at most three variables.  We analyze this case in \cref{family3} and \cref{family5}.
 \end{enumerate}

 Our first family concerns binomials $F$ for which the degree of a variable in the support of $g$ exceeds half of the degree of $F$.

\begin{theorem}\label{family4}
 We consider the binomial $$
F=X_1^{a_1}\cdots X_n^{a_n}(X_1^{b_1}\cdots X_r^{b_r}-X_{r+1}^{b_{r+1}}\cdots X_n^{b_n}), \quad 1\le r\le n-1,$$
where $\sum_{i=1}^r b_i=\sum_{i=r+1}^n b_i$.

\begin{itemize}
\item[(i)] Assume that $a_1-(a_2+\cdots +a_n)> b_{r+1}+\cdots +b_n.$ Then, $A_F$ has the WLP.
  \item[(ii)] Assume that $a_1-(a_2+\cdots +a_n)\ge  b_{r+1}+\cdots +b_n$ and $b_1>0$. Then,  $$NS_{A_F}\ge 3+a_1-(a_2+\cdots +a_n)-( b_{r+1}+\cdots +b_n).$$ 
\end{itemize}
\end{theorem}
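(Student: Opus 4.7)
For part~(i), my plan is to take $\ell = x_1$ and to invoke \cref{mid map}, reducing WLP to injectivity of $\times x_1\colon [A_F]_t\to [A_F]_{t+1}$ for $t=\lfloor(d-1)/2\rfloor$. By Macaulay duality, $x_1 P\equiv 0$ in $A_F$ is equivalent to $\partial_{X_1}(P\circ F)=0$, i.e., $P\circ F\in K[X_2,\ldots,X_n]$. Each monomial appearing in $x^\alpha\circ F$ is either $X^{m_1-\alpha}$ or $X^{m_2-\alpha}$, with $X_1$-degrees $a_1+b_1-\alpha_1$ and $a_1-\alpha_1$ respectively. The hypothesis $a_1>a_2+\cdots+a_n+b_{r+1}+\cdots+b_n$ rearranges to $a_1>d/2$, so $\alpha_1\le|\alpha|=t\le\lfloor(d-1)/2\rfloor<a_1$ and both $X_1$-degrees are strictly positive. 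Hence $P\circ F\in K[X_2,\ldots,X_n]$ forces $P\circ F=0$, proving injectivity and WLP.

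For part~(ii), the argument of~(i) extends unchanged to give injectivity of $\times x_1\colon [A_F]_i\to[A_F]_{i+1}$ for \emph{every} $i\le a_1-1$ (all that was used is $|\alpha|<a_1$). By the Gorenstein pairing on $A_F$, injectivity of $\times x_1$ at degree $i$ is equivalent to surjectivity at degree $d-i-1$, so $\times x_1$ is surjective on $[A_F]_j\to [A_F]_{j+1}$ for every $j\in\{\sigma,\ldots,d-1\}$, where $\sigma:=a_2+\cdots+a_n+b_{r+1}+\cdots+b_n$ and $d=a_1+\sigma$. Combining these two ranges, $\times x_1$ is bijective at every $i\in\{\sigma,\ldots,a_1-1\}$, so
\[
\HF_{A_F}(\sigma)=\HF_{A_F}(\sigma+1)=\cdots=\HF_{A_F}(a_1),
\]
a flat of length $a_1-\sigma+1$.

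To reach the claimed $NS_{A_F}\ge 3+a_1-\sigma$, this flat must be extended by one additional degree on each side; by Gorenstein symmetry this is equivalent to the single equality $\HF_{A_F}(\sigma-1)=\HF_{A_F}(\sigma)$, which in turn is equivalent to injectivity of $\times x_1$ at $i=a_1$. This endpoint analysis is what I expect to be the principal technical obstacle: a candidate kernel element at $i=a_1$ must have the form $P=c\,x_1^{a_1}+\widetilde{P}$, and $\widetilde{P}\circ F$ has to cancel the $X_1$-dependent monomial $c\,\tfrac{(a_1+b_1)!}{b_1!}X^{m_1-a_1 e_1}$ produced by $c\,x_1^{a_1}\circ F$. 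The monomial $X^{m_1-a_1e_1}$ appears in the image $R_{a_1}\circ F$ only via $\alpha=a_1 e_1$ itself, or via $\alpha'=(a_1-b_1,-b_2,\ldots,-b_r,b_{r+1},\ldots,b_n)$; the latter is a valid multi-index precisely when $b_j=0$ for all $2\le j\le r$. A case analysis on the vanishing pattern of these $b_j$, together with the hypothesis $b_1>0$, is expected to show that no nontrivial choice of $\widetilde{P}$ can supply the required cancellation without introducing further uncancellable $X_1$-dependent monomials further down the chain. This would force $c=0$, restore injectivity at $i=a_1$, and yield the full flat of length $a_1-\sigma+3$. An alternative route, when the decomposition $A_F=A_{m_1}\#_{A_g}A_{m_2}$ applies, would be to read off the flat directly from the explicit connected-sum Hilbert series.
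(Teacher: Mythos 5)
Your argument for part (i) is correct, and it takes a genuinely different and more economical route than the paper. The paper fixes an explicit monomial spanning set $\mathcal{B}_t=\mathcal{A}'_{(1,t)}\cup\mathcal{A}_{(2,t)}$ of $[A_F]_t$, orders it reverse-lexicographically, and shows that $\Hess_F^t$ has a block shape whose determinant evaluated at $(1,0,\dots,0)$ is nonzero, concluding via \cref{cor: hessian}; the even socle degree case then needs a separate bijection argument between $\mathcal{B}_{t-1}$ and $\mathcal{B}_t$. You instead verify directly that $\times x_1$ is injective on $[A_F]_{\lfloor (d-1)/2\rfloor}$: every monomial that can occur in $P\circ F$ has the form $X^{m_1-\alpha}$ or $X^{m_2-\alpha}$ with $\alpha_1\le t<a_1$, hence has strictly positive $X_1$-degree, so $\partial_{X_1}(P\circ F)=0$ forces $P\circ F=0$; \cref{mid map} then gives the WLP. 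This treats both parities of $d$ uniformly and avoids the basis bookkeeping entirely, at the cost of producing no information about the Hessian itself.

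For part (ii) there is a genuine gap. Your degree count gives injectivity of $\times x_1$ in every degree $i\le a_1-1$ and, by Gorenstein duality, surjectivity in every degree $\ge\sigma$, where $\sigma=a_2+\cdots+a_n+b_{r+1}+\cdots+b_n$; this yields the flat $\HF_{A_F}(\sigma)=\cdots=\HF_{A_F}(a_1)$ of length $a_1-\sigma+1$, which is two short of the claimed bound $3+a_1-\sigma$. You correctly reduce the missing two degrees (one on each side, by symmetry) to injectivity of $\times x_1$ in degree $a_1$, and you correctly locate where $b_1>0$ must enter, but you do not carry out the reduction: in the case $b_2=\cdots=b_r=0$ the monomial $X^{m_1-a_1e_1}$ can also arise as $X^{m_2-\alpha'}$, and the resulting chain of potential cancellations is only asserted to be ``expected'' to terminate. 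As written, the proof establishes only $NS_{A_F}\ge a_1-\sigma+1$. The paper sidesteps this endpoint analysis entirely: it argues by induction on $a_1-\sigma$, comparing the Hilbert functions of $A_F$ and $A_{X_1F}$ through the spaces $\langle x_1^{i_1}\cdots x_n^{i_n}\circ F\mid i_1+\cdots+i_n=s\rangle$ and exploiting that $A_{X_1F}$ has odd socle degree to obtain the extra equality $\HF_{A_F}(a_1-1)=\HF_{A_F}(a_1)$ in the base case. Either completing your cancellation analysis in degree $a_1$ or switching to such a comparison with $A_{X_1F}$ is needed to close part (ii).
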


\begin{proof}
(i) Denote $F = F_1-F_2$, where $F_i$ are the monomials
\begin{eqnarray*}
  F_1 &=& X_1^{a_1+b_1}\cdots X_r^{a_r+b_r}X_{r+1}^{a_{r+1}}\cdots X_n^{a_n}\\  F_2 &=& X_1^{a_1}\cdots X_r^{a_r}X_{r+1}^{a_{r+1}+b_{r+1}}\cdots X_n^{a_n+b_n}.  
\end{eqnarray*}  We set $d =\deg(F) = \sum_{i=1}^n a_i+\sum_{i=r+1}^n b_i$  and  $d' = \sum_{i=2}^n a_i+\sum_{i=r+1}^n b_i$. By assumption there inequality  $a_1>d'$ holds. 

We first consider the case $\deg(F)=2t+1$ in which we must have $a_1>t$ and $d'\leq t$. For each $0\le i\le d'$ consider 
\begin{align*}
\mathcal{A}_{(2,t)}^i = \{ x_1^{t-i}x_2^{\alpha_2}\cdots x_n^{\alpha_n}\mid  \alpha_2 + \cdots +\alpha_n = i \hspace{2mm}\text{and}\hspace{2mm}  &\alpha_2\le a_2,\dots ,\alpha_r\le a_r,\\
& \alpha_{r+1}\le a_{r+1}+b_{r+1},\dots ,\alpha_n\le a_n+b_n \}.
\end{align*}
Then $\mathcal{A}_{(2,t)}:=\bigcup_{i=0}^{d'}\mathcal{A}_{(2,t)}^i$ is the monomial basis for the monomial complete intersection algebra $A_{F_2}$ in degree $t$.  We expand  $\mathcal{A}_{(2,t)}$ to a monomial spanning  set for  $[A_F]_t$. To do so, consider each monomial $m_1$ of degree $t$ such that $m_1\notin \mathcal{A}_{(2,t)}$,  so $m_1\circ F_2=0$, and $m_1\circ F\neq 0$. Note that $m_1$ is of the form 
$$m_1=x_1^{t-\Bar{\beta}}x_2^{\beta_2}x_3^{\beta_3}\cdots x_r^{\beta_r}x_{r+1}^{\beta_{r+1}}\cdots x_n^{\beta_n}$$ 
such that $\beta_i\le a_i+b_i$ for $2\le i\le r$ and $\Bar{\beta}=\beta_2+\cdots +\beta_n$. 
We denote by $\mathcal{A}_{(1,t)}'$ the set of all such monomials for which $m_1\circ F\neq m_2\circ F$ for every $m_2\in \mathcal{A}_{(2,t)}$. 
Obviously, $\mathcal{B}_t:=\mathcal{A}_{(1,t)}'
\bigoplus \mathcal{A}_{(2,t)}$ spans $[A_F]_t$.

Order $\mathcal{B}_t$ in such a way that $\mathcal{A}_{(2,t)}$ and $\mathcal{A}_{(1,t)}'$ are ordered with respect to the reverse lexicographic order separately and elements in $\mathcal{A}_{(2,t)}$ are followed by elements in $\mathcal{A}_{(1,t)}'$. Consider $\Hess^t_F$ relative to $\mathcal{B}_t$ with this order. 

Denote by $M$ the submatrix of $\Hess^t_F$ corresponding to rows and columns in $\mathcal{A}_{(2,t)}$. We claim that $M$ is triangular with $X_1$ as anti-diagonal entries. We denote by $M_{(\mathcal{A}_{(2,t)}^i,\mathcal{A}_{(2,t)}^j)}$ the block of  $M$ with entries $uv\circ F$ such that $u\in \mathcal{A}_{(2,t)}^i,v\in \mathcal{A}_{(2,t)}^j$.
 Note that for every $u = x_1^{t-i}x_2^{\alpha_2}\cdots x_n^{\alpha_n}\in \mathcal{A}_{(2,t)}^i$ and $v = x_1^{t-j}x_2^{\alpha_2'}\cdots x_n^{\alpha_n'}\in \mathcal{A}_{(2,t)}^j$ we have 
 \[
 uv = x_1^{2t-i-j}x_2^{\alpha_2+\alpha_2'}\cdots x_n^{\alpha_n+\alpha_n'} \text{ such that } {\alpha_2+\alpha_2'}+\cdots + {\alpha_n+\alpha_n'} = i+j.\]
 If $j>d'-i$ or equivalently $i+j>d'$ we get $uv\circ F = 0$. Thus for every $j>d'-i$ the block $M_{(\mathcal{A}_{(2,t)}^i,\mathcal{A}_{(2,t)}^j)}$ is a zero block which means that all the entries of $M$ bellow the anti-diagonal blocks are zero. 
 
 \noindent Now we show that that for all $0\le i\le d'$, the anti-diagonal blocks of $M$, i.e. $M_{(\mathcal{A}_{(2,t)}^i,\mathcal{A}_{(2,t)}^{d'-i})}$, are square blocks for which the anti-diagonal entries are all equal to $X_1$ and entries bellow the anti-diagonal entries are all zero.
 For $u\in \mathcal{A}_{(2,t)}^i, v\in \mathcal{A}_{(2,t)}^{d'-i}$, we have  
 \[
 uv = x_1^{2t-d'}x_2^{\alpha_2+\alpha_2'}\cdots x_n^{\alpha_n+\alpha_n'} \quad \text{ where } \quad
 {\alpha_2+\alpha_2'}+\cdots + {\alpha_n+\alpha_n'} = d'.
 \]
 Since $\mathcal{A}_{(2,t)}$ is ordered in reverse lexicographic order the anti-diagonal entries of $M_{(\mathcal{A}_{(2,t)}^i,\mathcal{A}_{(2,t)}^{d'-i})}$ are equal to 
 $$x_1^{2t-d'}x_2^{a_2+b_2}\cdots x_r^{a_r+b_r}x_{r+1}^{a_{r+1}}\cdots x_n^{a_n}\circ F = X_1.
$$
The displayed equality holds because we have $2t-d'=a_1-1$.  On the other hand, the entries bellow the anti-diagonal entries of $M_{(\mathcal{A}_{(2,t)}^i,\mathcal{A}_{(2,t)}^{d'-i})}$ are equal to 
$$x_1^{2t-d'}x_2^{\alpha_2+\alpha'_2}\cdots x_n^{\alpha_n+\alpha'_n}\circ F = 0
$$ since $\alpha_n+\alpha'_n>a_n+b_n$. 

Let $\mathcal{A}_{(1,t)}$ be the monomial basis for the complete intersection algebra $A_{F_1}$. The same proof as above shows that the submatrix $N$ of $\Hess^t_F$ with respect to a basis of $[A_F]_t$ res\-tric\-ted to rows and columns in $\mathcal{A}_{(1,t)}$ is a triangular matrix having  $X_1$ as antidiagonal entries.

Now we claim that the submatrix of $N$ with rows and columns in $\mathcal{A}_{(1,t)}'$ has a similar shape. To show this we show that for $m_1,m_1'\in \mathcal{A}_{(1,t)}$ such that $m_1m_1'\circ F_1=X_1$, if  $m_1\in \mathcal{A}_{(1,t)}'$ then $m_1'\in \mathcal{A}_{(1,t)}'$. Suppose not. So either $m_1'\circ F_2\neq 0$ or $m_1'\circ F_1=m_2'\circ F_2$ for some $m_2'\in \mathcal{A}_{(2,t)}$.  

First assume that $m_1'\circ F_2\neq 0$ then $m_1'\circ \gcd(F_1,F_2)\neq 0$.  If we set
 \[
 m_1 = x_1^{\beta_1}x_2^{\beta_2}\cdots x_n^{\beta_n} \quad \text{ and } \quad
m_1' = x_1^{\gamma_1}x_2^{\gamma_2}\cdots x_n^{\gamma_n},
 \]
then for every $2\le i\le r$ we must have $\gamma_i\leq a_i$ and since $m_1'$ has degree $t$ and $a_1>t$ for $i=1$ we have $\gamma_1<a_1$. On the other hand, $m_1m_1'\circ F_1 = X_1$ forces $\beta_i+\gamma_i = a_i+b_i$ for every $2\le i\le r$.  This implies $\beta_i\geq b_i$ for every $2\le i\le r$. From these inequalities we obtain that for monomial $$m_2 = x_1^{a_1-\gamma_1-1}x_2^{\beta_2-b_2}\cdots x_r^{\beta_r-b_r}x_{r+1}^{\beta_{r+1}+b_{r+1}}\cdots x_n^{\beta_n+b_n}$$ we have $m_1\circ F = m_2\circ F$ which contradicts the assumption that $m_1\notin \mathcal{A}'_{(1,t)}$. Next if we assume $m_1'\circ F_1 = m_2'\circ F_2\neq 0$ then by acting by $m_1$ we obtain $m_1m_1'\circ F_1 = m_1m_2'\circ F_2\neq 0$ and this  means $m_1\circ F_2\neq 0$ which contradicts the assumption that $m_1\in \mathcal{A}_{(1,t)}'.$

We observe that the entries of the columns corresponding to $\mathcal{A}_{(1,t)}'$ and corresponding to row $\mathcal{A}_{(2,t)}$ is either zero or linear  in $X_2,\dots ,X_n$. By construction only $F_1$ contributes in columns corresponding to $\mathcal{A}_{(1,t)}'$ and since $\mathcal{A}_{(1,t)}'$ and $\mathcal{A}_{(2,t)}$ are disjoint sets there does not exist $m_2\in \mathcal{A}_{(2,t)}$ and  $m_1\in \mathcal{A}_{(1,t)}'$ such that $m_1m_2\circ F = m_1m_2\circ F_1 =  X_1$.

We conclude that $\Hess_F^t$ has the block form $\begin{pmatrix}
N&C\\
D&M\\
\end{pmatrix}$ where $N$ and $M$ are a triangular square matrices of size $|\mathcal{A}_{(1,t)}'|$ and $|\mathcal{A}_{(2,t)}|$ respectively with $X_1$ as anti-diagonal entries.  Moreover,  the entries of $D$ are in terms of $X_2,\dots ,X_n$.  This implies that $\Hess_F^t$ evaluated at $(1, 0 , \dots ,0)$ has non-zero determinant which allows us to use \cref{cor: hessian} and conclude that $x_1$ is a Lefschetz element for $A_F$ when $\deg(F)=2t+1$.

Now assuming $\deg(F) = 2t$ we get $a_1>t$ and $d'\leq t-1$.  We construct $\mathcal{B}_{t-1} = \mathcal{A}_{(1,t-1)}'\cup \mathcal{A}_{(2,t-1)}$ a spanning set for $[A_F]_{t-1}$ similar as above.  We show that $\mathcal{B}_t$ and $\mathcal{B}_{t-1}$ have the same number of elements.  First observe that since $d'\le t-1$ all elements in $\mathcal{A}_{(2,t)}$ are divisible by $x_1$ and moreover 
since we define $\mathcal{A}_{(2,t)}^i$ and $\mathcal{A}_{(2,t-1)}^i$ for $0\le i\le d'\le t-1$ multiplication by $x_1$ defines a bijection from $\mathcal{A}_{(2,t-1)}$ to $\mathcal{A}_{(2,t)}$.  Now note that if $m_1\in \mathcal{A}_{(1,t-1)}'$ satisfies  $m_1\circ F_1 \neq 0$, $m_1\circ F_2 =  0$ and  $m_1\circ F = m_2\circ F$ for some $m_2\in \mathcal{A}_{(2,t-1)}$ then there is a relation $x_1m_1\circ F = x_1m_2\circ F$  of degree $t$ implying $x_1m_1\in \mathcal{A}_{(1,t)}'$.  Since every monomial in the basis of $[A_F]_t$ is divisible by $x_1$ then dividing every monomial in $\mathcal{A}_{(1,t)}'$ gives a monomial in $\mathcal{A}_{(1,t-1)}'$.  This finishes the proof that multiplication by $x_1$ is a bijection from $\mathcal{B}_{t-1}$  to $\mathcal{B}_t$. Thus we have shown that $x_1$ is the Weak Lefschetz element for $A_F$ when $\deg(F)=2t$. 

\vskip 2mm

(ii) We proceed by induction on $t= a_1-(a_2+\cdots +a_n)-  (b_{r+1}+\cdots +b_n)$. Assume $t=0$ and consider the binomials $F$ and $G=X_1F$.
We first observe that the socle degree of $A_F$ is even and equal to $d=2a_1=2(a_2+\cdots +a_n+b_{r+1}+\cdots +b_n)$ while the socle degree of $A_G$ is odd and equal to $d+1=2a_1+1$. 
We define the K-vector spaces:
$$E_s=\langle x_1^{i_1}\cdots x_n^{i_n}\circ F \mid i_1+\cdots +i_n=s \rangle $$
and 
$$D_s=\langle x_1^{i_1}\cdots x_n^{i_n}\circ G \mid i_1+\cdots +i_n=s \rangle .$$
A straightforward computation gives us 
\begin{equation}\label{keyequality}
\dim E_s = \dim D_s \text{ for all } s\le a_1+1=\sum_{i=2}^na_i+\sum_{j=r+1}^nb_j +1.
\end{equation}
Therefore, we have
\begin{equation}\label{keyequality2}
    \HF_{A_G}(d+1-s)=\HF_{A_F}(d-s) \text{ for all } s\le a_1+1=\sum_{i=2}^na_i+\sum_{j=r+1}^nb_j +1.
\end{equation}
Since the socle degree of $A_G$ is odd and equal to $2a_1+1$ we  have $\HF_{A_G}(a_1)=\HF_{A_G}(a_1+1)$. Using the equalities (\ref{keyequality2}) we get  $\HF_{A_F}(a_1-1)=\HF_{A_F}(a_1)$ and being the socle degree of $A_F$ even and equal to $2a_1$ we conclude that $HF_{A_F}(a_1-1)=\HF_{A_F}(a_1)=\HF_{A_F}(a_1+1)$; i.e. $NS_{A_F}\ge 3$ which also implies that $NS_{A_G}\ge 4$. 

We now assume that the result is true for $t=\rho$ and we will prove for $t=\rho +1$.  We start with $$
F=X_1^{a_1}\cdots X_n^{a_n}(X_1^{b_1}\cdots X_r^{b_r}-X_{r+1}^{b_{r+1}}\cdots X_n^{b_n}), \quad 1\le r\le n-1,$$
where $\sum_{i=1}^r b_i=\sum_{i=r+1}^n b_i$, $b_1>0$ and $t = a_1-(a_2+\cdots +a_n)-  (b_{r+1}+\cdots +b_n)=\rho $.  We define $G=X_1F$ and  we denote by $d$ (resp. $d+1$) the socle degree of $A_F$ (resp. $A_G$).   Arguing as above we get
\begin{equation}\label{keyequality}
\dim \langle x_1^{i_1}\cdots x_n^{i_n}\circ F \mid i_1+\cdots +i_n=s \rangle  = \dim \langle x_1^{i_1}\cdots x_n^{i_n}\circ G \mid i_1+\cdots +i_n=s \rangle  \text{ for all } s\le a_1+1.
\end{equation}
and, hence, 
\begin{equation}\label{keyequality3}
    \HF_{A_G}(d+1-s)=\HF_{A_F}(d-s) \text{ for all } s\le a_1+1.
\end{equation}
which allow us to conclude that 
\[ NS_{A_G}=NS_{A_F}+1\ge (a_1+1)-(a_2+\cdots +a_n)-  (b_{r+1}+\cdots +b_n)=\rho +1,\]
which proves what we want.
\end{proof}

\vskip 2mm
Let us illustrate \cref{family4} with a concrete example.
\begin{example}
We consider the binomial $F=X^6Y^2(X^2Y-Z^3)$ and the associated codimension 3 Artinian Gorenstein algebra $A_F$.  The $h$-vector of $A_F$ is $$(1,3,6,10,12,12,12,12,10,6,3,1).$$ Therefore, we have: $S_{A_F}=12$ and $NS_{A_F}=4$. The monomial basis of $A_F$ in degree $5$, written in reverse lexicographic order, is
\[
\mathcal{B}=\{
\begin{small}
    x^{5}, x^{4}y, x^{4}z, x^{3}y^{2}, x^{3}y\,z, x^{3}z^{2}, x^{2}y^{2}z, x^{2}y\,z^{2}, x^{2}z^{3}, x\,y^{2}z^{2}, x\,y\,z^{3}, y^{2}z^{3}
\end{small}
      \}
\]
and with respect to this basis the Hessian matrix is
\newcommand{\rvline}{\hspace*{-\arraycolsep}\vline\hspace*{-\arraycolsep}}
\[
\Hess^5_{F}=
\begin{smallmatrix}\vphantom{0000000002\,z2\,y2\,x}\\ \vphantom{00000002\,z2\,y02\,x0}\\ \vphantom{0000002\,z2\,y02\,x00}\\ \vphantom{000002\,z002\,x000}\\ \vphantom{00002\,z2\,y02\,x0000}\\ \vphantom{0002\,z2\,y02\,x00000}\\ \vphantom{002\,z002\,x000000}\\ \vphantom{02\,z2\,y02\,x0000000}\\ \vphantom{02\,y02\,x00000000}\\ \vphantom{2\,z02\,x000000000}\\ \vphantom{2\,y2\,x0000000000}\\ \vphantom{2\,x00000000000}\\\end{smallmatrix}\left(\begin{smallmatrix}
      \vphantom{ }0&0&0&0&0&0&0&0&0&2\,z&2\,y&2\,x\\
      \vphantom{ }0&0&0&0&0&0&0&2\,z&2\,y&0&2\,x&0\\
      \vphantom{ }0&0&0&0&0&0&2\,z&2\,y&0&2\,x&0&0\\
      \vphantom{ }0&0&0&0&0&2\,z&0&0&2\,x&0&0&0\\
      \vphantom{ }0&0&0&0&2\,z&2\,y&0&2\,x&0&0&0&0\\
      \vphantom{ }0&0&0&2\,z&2\,y&0&2\,x&0&0&0&0&0\\
      \vphantom{ }0&0&2\,z&0&0&2\,x&0&0&0&0&0&0\\
      \vphantom{ }0&2\,z&2\,y&0&2\,x&0&0&0&0&0&0&0\\
      \vphantom{ }0&2\,y&0&2\,x&0&0&0&0&0&0&0&0\\
      \vphantom{ }2\,z&0&2\,x&0&0&0&0&0&0&0&0&0\\
      \vphantom{ }2\,y&2\,x&0&0&0&0&0&0&0&0&0&0\\
      \vphantom{ }2\,x&0&0&0&0&0&0&0&0&0&0&0\\
      \end{smallmatrix}\right)
      \]
and its determinant is $(2x)^{12}$. This shows that $x$ is a weak Lefschetz element on $A_F$ as predicted by  \cref{family4}.
\end{example}

The importance of studying the length  $NS_{A}$ of the flat portion in the Hilbert function of an AG algebra is given by the ability to transfer the Lefschetz property from an algebra having a flat to its quotients as illustrated in the following lemma.

\begin{lemma}\label{lem: flat}
   Let $F$ and $G$ be homogeneous polynomials in $S$ such that $F=p\circ G$ for some $p\in R$. If $A_G$ satisfies the weak Lefschetz property and $NS_{A_G} \geq \deg(p)+ 2$,  then $A_F$ satisfies the weak Lefschetz property and $NS_{A_F}\geq 2\left\lfloor \frac{NS_{A_G}}{2}\right\rfloor -\deg(p)$.
\end{lemma}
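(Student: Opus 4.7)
The plan is to realize $A_F$ as a graded quotient of $A_G$ and transfer a weak Lefschetz element from $A_G$. First, since $R$ acts on $S$ by commuting partial derivatives, the identity $q\circ F = q\circ(p\circ G)=(qp)\circ G$ shows that $q\in\Ann_R(F)$ if and only if $qp\in\Ann_R(G)$, i.e., $\bar q\bar p=0$ in $A_G$, where $\bar p$ is the image of $p$. Hence there is a natural surjection of graded $K$-algebras $\pi\colon A_G\twoheadrightarrow A_F$ with kernel $(0:_{A_G}\bar p)$. Being $A_G$-linear, $\pi$ intertwines multiplication by any linear form $\ell$, so whenever $\times\ell\colon [A_G]_k\to[A_G]_{k+1}$ is surjective the same holds for $\times\ell\colon[A_F]_k\to[A_F]_{k+1}$.

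Next I would fix a weak Lefschetz element $\ell$ for $A_G$. Since WLP implies unimodality of $\HF_{A_G}$, and Gorenstein duality provides the symmetry $\HF_{A_G}(i)=\HF_{A_G}(d_G-i)$ where $d_G=\deg G$, the set of degrees attaining the maximum is a contiguous interval $[a_G,d_G-a_G]$ with $a_G=(d_G-NS_{A_G}+1)/2$. In particular, for every $k\geq a_G$ one has $\HF_{A_G}(k)\geq\HF_{A_G}(k+1)$, so $\times\ell$ is surjective on $[A_G]_k\to[A_G]_{k+1}$. By \cref{mid map}, to conclude that $\ell$ is a weak Lefschetz element of $A_F$ it suffices to check that $\times\ell\colon[A_F]_t\to[A_F]_{t+1}$ is surjective for $t=\lfloor d_F/2\rfloor$, where $d_F=\deg G-\deg p$. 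A short parity check establishes the required inequality $t\geq a_G$: when $d_G-\deg p$ is even it reduces to $NS_{A_G}\geq\deg p+1$, while when $d_G-\deg p$ is odd it reduces to $NS_{A_G}\geq\deg p+2$, so the hypothesis covers both cases. Combined with the transfer principle above this yields WLP for $A_F$.

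Finally, for the bound on $NS_{A_F}$, the same transfer shows that $\times\ell\colon[A_F]_k\to[A_F]_{k+1}$ is surjective for every $k\geq a_G$; thus $\HF_{A_F}$ is non-increasing on $[a_G,d_F]$. Gorenstein duality on $A_F$ gives $\HF_{A_F}(a_G)=\HF_{A_F}(d_F-a_G)$, and together with the monotonicity this forces $\HF_{A_F}$ to be constant on $[a_G,d_F-a_G]$; the common value equals $S_{A_F}$ because $\lfloor d_F/2\rfloor$ sits inside this interval by the same parity analysis. Therefore
\[NS_{A_F}\geq d_F-2a_G+1 \;=\; NS_{A_G}-\deg p \;\geq\; 2\lfloor NS_{A_G}/2\rfloor-\deg p.\]
The main obstacle in this plan is the parity bookkeeping in the middle step: one has to verify that $NS_{A_G}\geq\deg p+2$ is exactly the right hypothesis in both parities of $d_G-\deg p$ so that the midpoint of $A_F$ always lands in the flat of $A_G$; everything else is a clean transfer argument using unimodality and Gorenstein symmetry.
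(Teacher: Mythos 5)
Your proof is correct, but it implements the transfer in the opposite (dual) direction from the paper, and the difference is worth noting. The paper dualizes the natural surjection $A_G\twoheadrightarrow A_F$ into an inclusion $A_F\hookrightarrow A_G(\deg p)$ with cokernel $C$, and then runs the snake lemma on the multiplication-by-$\ell$ diagram to transfer \emph{injectivity} of $\times\ell$ from the flat range of $A_G$ down to the degrees of $A_F$ around $\lfloor \deg(F)/2\rfloor$, invoking \cref{mid map} at the end. You instead stay with the surjection $\pi\colon A_G\twoheadrightarrow A_F$ itself (whose existence via $\Ann_R(G)\subseteq\Ann_R(F)$ and whose kernel $(0:_{A_G}\bar p)$ you identify correctly) and transfer \emph{surjectivity} of $\times\ell$, using the standard facts that WLP plus Gorenstein symmetry make $\HF_{A_G}$ unimodal with a contiguous flat $[a_G,\,d_G-a_G]$ centred at $d_G/2$, and that $\times\ell$ is surjective on $A_G$ in every degree $\geq a_G$. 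Your parity check that $\lfloor \deg(F)/2\rfloor\geq a_G$ is exactly the same numerical condition the paper extracts from the snake-lemma range, and your hypothesis accounting is right in both parities. Your route is more elementary --- no canonical modules, no cokernel, no snake lemma --- and it even yields the marginally sharper conclusion $NS_{A_F}\geq NS_{A_G}-\deg(p)\geq 2\lfloor NS_{A_G}/2\rfloor-\deg(p)$, since surjectivity of $\times\ell$ in all degrees $\geq a_G$ combined with the symmetry $\HF_{A_F}(a_G)=\HF_{A_F}(\deg(F)-a_G)$ forces the Hilbert function of $A_F$ to be constant on the whole interval $[a_G,\deg(F)-a_G]$. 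The only points you leave implicit --- that $a_G=(d_G-NS_{A_G}+1)/2$ is an integer (which holds because $d_G$ and $NS_{A_G}$ always have opposite parity for a symmetric unimodal $h$-vector) and that WLP implies unimodality so the maximizing set is an interval --- are standard and easily supplied.
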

\begin{proof}
Set $\deg(G)=d$, $\deg(F)=e$, $\deg(p)=d-e$ and $t=\lfloor \frac{e}{2}\rfloor$. Since $F=p\circ G$, taking into account the definition of the dual algebras $A_F$ and $A_G$ there is a surjection $A_G\to A_F$. Applying the functor $\Hom_K(-,K)$ which turns $A_F$ and $A_G$ into their respective canonical modules produces an injection $A_F(e)\hookrightarrow A_G(d)$ or $A_F\hookrightarrow A_G(d-e)$ with cokernel $C$. Multiplication by a general linear form $\ell$ produces a commutative diagram
      \begin{equation}\label{CD1}
      \begin{tikzcd}
        0\arrow{r} &(A_F)_{i-1}\arrow{r}\arrow{d}{\times\ell}&(A_{G})_{i-1+d-e} \arrow{r} \arrow{d}{\times \ell} & C_{i-1+d-e} \arrow{r}\arrow{d}{\times \ell} &0\\
0\arrow{r} &(A_F)_{i}\arrow{r}{\times x_2}&(A_{G})_{i+d-e} \arrow{r}& C_{i+d-e}\arrow{r} &0.
      \end{tikzcd}    
\end{equation}
 We have that $H_{A_G}(i)=S_{A_G}$ for $ \lfloor \frac{d}{2}\rfloor -\left \lfloor\frac{NS_{A_G}}{2} \right\rfloor \leq i\leq \lfloor \frac{d}{2} \rfloor + \left \lfloor \frac{NS_{A_G}}{2} \right \rfloor$.  Thanks to  $A_G$ satisfying the WLP, this implies that the middle map in \eqref{CD1} is bijective for 
 \[
\left \lfloor \frac{d}{2} \right\rfloor - \left \lfloor \frac{NS_{A_G}}{2} \right \rfloor \leq i-1+d-e < i+d-e\leq \left \lfloor \frac{d}{2}\right \rfloor + \left \lfloor \frac{NS_{A_G}}{2} \right \rfloor.
 \]
 Thus by the snake lemma the leftmost vertical map in \eqref{CD1} is injective for 
 \begin{equation}\label{eq: range}
 e- \left\lceil \frac{d}{2} \right\rceil - \left \lfloor \frac{NS_{A_G}}{2} \right \rfloor +1 \leq i\leq e- \left \lceil \frac{d}{2} \right \rceil + \left \lfloor \frac{NS_{A_G}}{2} \right \rfloor.
 \end{equation}
         Since $NS_{A_G}\geq \deg(p)+2=d-e+2$ it follows that $i=\lfloor \frac{e}{2}\rfloor+1$ falls in the range in \eqref{eq: range}.  By \cref{mid map} we conclude that $A_F$ has WLP. Since the leftmost vertical map in \eqref{CD1} is surjective for $i\geq \lfloor \frac{e}{2}\rfloor +1$,   we conclude that  $H_{A_F}(i)=S_{A_F}$ for $ \lfloor \frac{e}{2}\rfloor  \leq i \leq e- \lceil \frac{d}{2} \rceil + \left \lfloor \frac{NS_{A_G}}{2} \right \rfloor$. Symmetrizing this range of indices about $\lfloor \frac{e}{2}\rfloor$ yields $NS_{A_F} \geq 2\left\lfloor \frac{NS_{A_G}}{2}\right\rfloor -\deg(p)$.
\end{proof}

The second family of binomial Macaulay dual generators we consider are characterized by the fact that  the gcd of the two monomials of $F$ is a pure power $X_i^a$ and  one of the monomials of $F/X_i^a$ is also a pure power $X_1^{b_1}$. We analyze separately the case when $i\neq 1$( \cref{family2}) and the case when $i=1$ (\cref{family1}).

 \begin{proposition}\label{family2} Fix an integer $i$, $2\le i \le n$. The algebra $A_F$ having Macaulay dual generator  the binomial 
 $$F=m_1-m_2=X_i^a(X_1^{b_1}-X_2^{b_2}X_3^{b_3}\cdots X_n^{b_n}) \qwith  \sum_{i=2}^nb_i=b_1,$$  has the WLP.
\end{proposition}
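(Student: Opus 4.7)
The plan is to combine the three tools developed in this section: \cref{thm: conn sum criterion}, \cref{family4}, and \cref{lem: flat}, splitting into cases according to the size of $a$ relative to $b_1$. Since the gcd of the two monomials of $F$ is $X_i^a$, one has $\deg g = a$ and $\deg F = a+b_1$. If $a<\lfloor(a+b_1-1)/2\rfloor$, which holds roughly for $a\leq b_1-3$, \cref{thm: conn sum criterion} applies directly. I may also assume $b_j>0$ for some $j\neq 1,i$, since otherwise $F$ is supported on only two variables and $A_F$ is a codimension-two complete intersection, for which the WLP is automatic.

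For $a\geq b_1-1$, I would apply \cref{lem: flat} to the polynomial $G:=X_i^{A}(X_1^{b_1}-X_2^{b_2}\cdots X_n^{b_n})$ with an auxiliary integer $A>b_1$, and the operator $p:=x_i^{A-a}$. A direct derivative computation yields $p\circ G = c_1 m_1 - c_2 m_2$ with nonzero constants, and a suitable rescaling of variables produces an isomorphism $A_{p\circ G}\cong A_F$. To verify the hypotheses of \cref{lem: flat}, I relabel variables (swap $X_1\leftrightarrow X_i$, move the original $X_1$ to the last slot, and negate the binomial) to place $G$ into the form of \cref{family4} with outer exponent $a_1=A$, inner parameter $r=n-1$, and $b_n''=b_1$. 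Condition (i) of \cref{family4} then becomes $A>b_1$ and yields the WLP of $A_G$, while condition (ii), with $b_1''=b_i$ after a permissible choice of the second slot, gives $NS_{A_G}\geq 3+A-b_1$. Taking $A=b_1+1$, the hypothesis $NS_{A_G}\geq\deg(p)+2$ becomes $a\geq b_1-1$, so the entire intermediate range is covered. The case $a>b_1$ is handled directly by applying \cref{family4}(i) to $F$ itself after the same relabelling.

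The main obstacle is the narrow boundary $a=b_1-2$ (which only arises when $a+b_1$ is even): here \cref{thm: conn sum criterion} fails by an equality and the lower bound $3+A-b_1$ from \cref{family4}(ii) is exactly one short of what \cref{lem: flat} requires. For this case I would try an alternative decomposition $F=p'\circ G'$ with the pure monomial $G':=X_1^{b_1} X_i^{a+b_i}\prod_{j\neq 1,i} X_j^{b_j}$ and the binomial operator $p':=x_2^{b_2}\cdots x_n^{b_n}-x_1^{b_1}$ of degree $b_1$. Since $A_{G'}$ is a monomial complete intersection, it has the SLP and its flat portion can be read off from its exponents, which one hopes is enough to meet the flat inequality. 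As a fallback, WLP can be verified directly via the Hessian criterion of \cref{cor: hessian}, by exhibiting a perfect matching on a monomial basis of $[A_F]_t$ making the mixed Hessian at $e_i$ nonzero; computational evidence in small cases suggests that $x_i$ is the natural Lefschetz element for this family.
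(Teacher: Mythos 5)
Your overall strategy coincides with the paper's: dispose of small $a$ with \cref{thm: conn sum criterion}, of large $a$ with \cref{family4}(i), and of the intermediate range by writing $F=p\circ G$ for a binomial $G=X_i^{A}(X_1^{b_1}-X_2^{b_2}\cdots X_n^{b_n})$ with $A>b_1$ and combining \cref{family4}(ii) with \cref{lem: flat}. (The paper does this in one-step increments, treating $a=b_1$ via $G=X_iF$ and then $a=b_1-1$ by iterating, rather than your single application with $A=b_1+1$; the two are equivalent.) Your reduction of the two-variable case to codimension two matches the paper's deferral to \cref{family3}. So for $a\le b_1-3$ and $a\ge b_1-1$ your argument is essentially the paper's.

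The genuine problem is the boundary case $a=b_1-2$, which you correctly isolate but do not close. Your first patch --- writing $F=p'\circ G'$ with $G'$ the monomial $X_1^{b_1}X_i^{a+b_i}\prod_{j\ne 1,i}X_j^{b_j}$ and $\deg p'=b_1$ --- cannot work in general: \cref{lem: flat} would require $NS_{A_{G'}}\ge b_1+2$, but a monomial complete intersection with several comparable exponents has a very short flat. For instance, with $n=3$, $i=2$, $b_1=3$, $(b_2,b_3)=(0,3)$ and $a=1$ one gets $G'=X_1^3X_2X_3^3$, so $A_{G'}=K[x_1,x_2,x_3]/(x_1^4,x_2^2,x_3^4)$ has $h$-vector $(1,3,5,7,7,5,3,1)$ and $NS_{A_{G'}}=2$, far below the required $b_1+2=5$. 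Your second patch (Hessian computations in small cases) is evidence, not a proof. So the proposal as written has a hole at $a=b_1-2$. It is only fair to add that this value is delicate in the source too: the paper's Case (1) subsumes $a=b_1-2$ under \cref{thm: conn sum criterion} even though there $\deg(g)=a$ \emph{equals} $\lfloor(\deg F-1)/2\rfloor$ rather than being strictly smaller; still, your write-up explicitly leaves that case unproved, and neither of your suggested repairs is valid.
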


\begin{proof} Without loss of generality, we can assume $i=2$. We may also assume that at least one of $b_3, \ldots, b_n$ is nonzero, since the case when $b_3=\cdots=b_n=0$ is treated in \cref{family3}.
We distinguish 4 cases:
\begin{inparaenum}
\item  $a<b_1-1$,
 \item  $a> b_1$,
    \item $a= b_1$,
    \item  $a=b_1-1$.
\end{inparaenum}
\begin{enumerate}[leftmargin=2em]
\item Assume  $a<b_1-1$. In this case,  $\deg (\gcd(m_1,m_2))=a< \left \lfloor\frac{\deg (F)-1}{2} \right \rfloor$ and  we  apply  \cref{thm: conn sum criterion} to conclude that $A_F$ has the WLP. 

    \item Assume $a> b_1$. In this case, we apply \cref{family4} (i).

    \item Assume  $a=b_1$. 
Set $G=X_2F$.
By \cref{family4}  we know that $A_{G}$ has the WLP and $NS_{A_{G}}\ge 4$. since $A_{G}$ has  socle degree $2a+1$ (odd). Then, as $F=x_2\circ G$ , \cref{lem: flat} ensures $A_F$ has the WLP and $NS_{A_{F}}\ge 3$. 



    \item Assume  $a=b_1-1$. We consider $A_F$ and $A_{X_2F}$. By the previous case we know that $A_{X_2F}$ has the WLP and $NS_{A_{X_2F}}\ge 3$. Using once more \cref{lem: flat}, we conclude that $A_F$ also satisfies the WLP.     
\end{enumerate}
\end{proof}

 \begin{proposition}\label{family1} We consider the algebra $A_F$ having  Macaulay dual generator the binomial $$F=m_1-m_2=X_1^a(X_1^b-X_2^{b_2}X_3^{b_3}\cdots X_n^{b_n}) \qwith  \sum_{i=2}^nb_i=b.$$ If $K$ is a field of characteristic zero, then $A_F$ has the WLP. Moreover, if $a\ge b-1$, then $A_F$ is a complete intersection.
\end{proposition}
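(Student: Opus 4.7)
The plan is to mirror the case analysis in the proof of \cref{family2} for the WLP claim, and to exhibit an explicit complete intersection ideal contained in $\Ann_R(F)$ for the moreover claim. Write $d = \deg(F) = a+b$ and $g = \gcd(m_1, m_2) = X_1^a$. The WLP argument splits into the four cases $a < b-1$, $a > b$, $a = b$, and $a = b-1$, paralleling the four cases of \cref{family2}. When $a < b-1$, we have $\deg(g) = a < \lfloor (d-1)/2 \rfloor$, so \cref{thm: conn sum criterion} applies directly. When $a > b$, \cref{family4}(i) applies with $a_1 = a$, $a_j = 0$ for $j \geq 2$, $r = 1$, and $\sum_{j=r+1}^n b_j = b$: the required strict inequality $a_1 > a_2 + \cdots + a_n + b_{r+1} + \cdots + b_n$ reads $a > b$.

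For the borderline cases I pass to $G = X_1 F = X_1^{a+1}(X_1^b - X_2^{b_2}\cdots X_n^{b_n})$. When $a = b$, the gcd exponent in $G$ is $a+1 = b+1 > b$, so \cref{family4}(i) applies to $A_G$ and \cref{family4}(ii) gives $NS_{A_G} \geq 3 + (a+1) - b = 4$. A short Leibniz computation yields $x_1 \circ G = (a+b+1)X_1^{a+b} - (a+1)X_1^a X_2^{b_2}\cdots X_n^{b_n}$, which differs from $F$ only by the coefficient ratio of the two monomials; this discrepancy can be absorbed by a diagonal rescaling of the variables $X_2, \ldots, X_n$ (possible because $\sum_{i=2}^n b_i = b > 0$), so $A_F \cong A_{x_1 \circ G}$. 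Invoking \cref{lem: flat} with $p = x_1$ (so $\deg(p)+2 = 3 \leq NS_{A_G}$) yields WLP for $A_F$ and $NS_{A_F}\geq 3$. When $a = b-1$, the polynomial $G = X_1 F$ falls into the case just treated, so $A_G$ has WLP with $NS_{A_G} \geq 3$; a second application of \cref{lem: flat} transfers WLP to $A_F$.

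For the moreover claim, assuming $a \geq b-1$, I propose the candidate ideal
\[
J = \bigl(x_2^{b_2+1},\, \ldots,\, x_n^{b_n+1},\; a!\, b_2! \cdots b_n! \cdot x_1^{a+1} + (a+b)! \cdot x_1^{a-b+1} x_2^{b_2}\cdots x_n^{b_n}\bigr)
\]
and claim $\Ann_R(F) = J$. A direct computation shows $J \subseteq \Ann_R(F)$: each $x_i^{b_i+1}$ (for $i\geq 2$) kills both $m_1$ and $m_2$ by exponent considerations, and the binomial generator sends $m_1$ and $m_2$ to cancelling scalar multiples of $X_1^{b-1}$ thanks to the chosen factorials. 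The generators of $J$ form a regular sequence: the $n-1$ coprime pure powers $x_i^{b_i+1}$ do, and the restriction of the binomial generator to $K[x_1]$ (setting $x_2 = \cdots = x_n = 0$) is the nonzero element $a!\, b_2! \cdots b_n! \cdot x_1^{a+1}$, hence a non-zero divisor modulo the rest. Therefore $R/J$ is a complete intersection (hence Gorenstein) of socle degree $(a+1-1) + \sum_{i=2}^n (b_i+1-1) = a+b = \reg(A_F)$. Since the natural surjection $R/J \twoheadrightarrow A_F$ is between AG algebras of the same socle degree, \cref{lem: socle fits} forces it to be an isomorphism, and $A_F$ is a complete intersection.

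The main obstacle I anticipate is the rescaling step in the borderline cases $a = b$ and $a = b-1$, where $x_1 \circ G$ agrees with $F$ only up to distinct nonzero coefficients on the two monomials; one must explicitly reconcile these coefficients by a diagonal change of variables (which is harmless on the AG-algebra side) in order to invoke \cref{lem: flat}. A secondary technical point is pinning down the precise scalars in the binomial generator of $J$, obtained by balancing the images of $x_1^{a+1}$ and $x_1^{a-b+1} x_2^{b_2}\cdots x_n^{b_n}$ acting on $F$, both of which are scalar multiples of $X_1^{b-1}$; this is a routine factorial identity but must be carried out carefully in order to verify $J\subseteq \Ann_R(F)$.
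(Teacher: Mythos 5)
Your proof is correct, but it reaches the WLP in the range $a\ge b-1$ by a genuinely different route than the paper. The paper handles only two cases: $a<b-1$ via \cref{thm: conn sum criterion} (as you do), and $a\ge b-1$ by first proving the \emph{complete intersection} claim ($A_F\cong R/J$ via \cref{lem: socle fits}) and then deducing WLP from the fact that $\init_>(J)=\langle x_1^{a+1},x_2^{b_2+1},\dots,x_n^{b_n+1}\rangle$ is a monomial complete intersection, invoking the theorem of Wiebe that WLP descends from the initial ideal. You instead mirror the four-case analysis of \cref{family2}, using \cref{family4}(i) for $a>b$ and the flat-transfer \cref{lem: flat} applied to $G=X_1F$ for $a=b$ and $a=b-1$; this avoids Gr\"obner degeneration entirely but makes the WLP argument longer and dependent on \cref{family4}. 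The paper's route has the advantage that the complete intersection structure does double duty (it yields WLP for free), while yours has the advantage of also producing the flat estimate $NS_{A_F}\ge 3$ in the borderline cases. For the ``moreover'' claim your argument coincides with the paper's (same ideal $J$, same socle-degree isomorphism via \cref{lem: socle fits}); you are in fact more careful than the paper in supplying the factorial coefficients needed so that the binomial generator actually annihilates $F$. The one point requiring care in your version is the rescaling step: $x_1\circ G$ equals $F$ only up to distinct positive coefficients on the two monomials, so you must pass to a diagonal change of coordinates (possibly after extending to $\overline{K}$, which is harmless since maximal rank of multiplication maps is preserved under field extension) before citing \cref{lem: flat}. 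This is a real step, since $F$ itself need not lie in the inverse system of $G$, but it is the same issue implicitly present in the paper's own proof of \cref{family2} and is routinely resolved.
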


\begin{proof}
We distinguish 2 cases:

\begin{enumerate}[leftmargin=2em]
\item Assume $a<b-1$. In this case  $\deg (\gcd(m_1,m_2))=a< \left \lfloor\frac{\deg (F)-1}{2} \right \rfloor$ and we can apply \cref{thm: conn sum criterion} to conclude that $A_F$ has the WLP.

    \item Assume $a\ge b-1$. In this case, we consider the complete intersection ideal $$J=\langle x_2^{b_2+1},x_3^{b_3+1},\dots , x_n^{b_n+1}, x_1^{a+1}+x_1^{a-b+1}x_2^{b_2}\cdots x_n^{b_n}\rangle \subset \Ann(F).$$

This induces a canonical surjection $R/J\twoheadrightarrow A_F$. Since both $R/J$ and $A_F$ are AG algebras of the same socle degree $\reg(A_F)=\reg(R/J)=a+\sum_{i=2}^n b_i=a+b$, \cref{lem: socle fits} yields that $A_F\cong R/J$.
With respect to any monomial order $>$ such that $x_1>x_i$ for all $i\geq 2$, the initial ideal of $J$, $\init_>(J)$, is a monomial complete intersection 
\[
\init_>(J)=\langle x_1^{a+1}, x_2^{b_2+1},\ldots, x_n^{b_n+1}\rangle.
\]
Since $R/\init_>(J)$ has WLP in characteristic zero, we conclude, by \cite{Wiebe}, that $A_F=R/J$ has WLP.
\end{enumerate}
    \end{proof}

In the last family we analyze, the irreducible binomial factor of $F$ has small support, namely it can be written in term of two variables (\cref{family3}) or three variables (\cref{family5}). 

\begin{proposition}\label{family3}
If $K$ is a field of characteristic zero, the algebra $A_F$ having the binomial Macaulay dual generator $$F=X_1^{c_1}X_2^{c_2}X_3^{c_3}\cdots X_n^{c_n}(X_1^m-X_2^m) \qwith c_i>0,$$   has the SLP.
\end{proposition}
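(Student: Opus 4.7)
The plan is to exploit the product structure of $F$. Split the variables as $\{X_1,X_2\}$ and $\{X_3,\dots,X_n\}$ and write
\[
F \;=\; F_1 \cdot F_2, \qquad F_1 = X_1^{c_1}X_2^{c_2}(X_1^m-X_2^m), \qquad F_2 = X_3^{c_3}\cdots X_n^{c_n},
\]
so that $F_1$ depends only on $X_1,X_2$ and $F_2$ depends only on $X_3,\dots,X_n$. The first step is to observe the well-known consequence of Macaulay-Matlis duality that for a product of polynomials in disjoint variable sets one has a tensor decomposition of the associated AG algebras; concretely, with $R_1=K[x_1,x_2]$ and $R_2=K[x_3,\dots,x_n]$,
\[
A_F \;\cong\; A_{F_1}\otimes_K A_{F_2},
\]
where $A_{F_1}$ is the AG quotient of $R_1$ and $A_{F_2}$ is the AG quotient of $R_2$.

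Next I would verify that each tensor factor has the SLP. The factor $A_{F_2}$ is the monomial complete intersection $K[x_3,\dots,x_n]/(x_3^{c_3+1},\dots,x_n^{c_n+1})$, which has the SLP by the classical results of Stanley and Watanabe in characteristic zero. For the factor $A_{F_1}$, note that the two variables $X_1,X_2$ both genuinely appear (since $c_1,c_2>0$), so $A_{F_1}$ is an AG algebra of codimension exactly two. Every graded Artinian Gorenstein algebra of codimension two is a complete intersection (by the Serre structure theorem for height two Gorenstein ideals), and every codimension two complete intersection over a field of characteristic zero has the SLP; thus $A_{F_1}$ has the SLP.

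Finally, I would invoke the theorem (due to Stanley and Watanabe, see the survey \cite{LefscetzBook}) that over a field of characteristic zero the tensor product of two standard graded Artinian $K$-algebras with the SLP again has the SLP. Applying this to $A_F\cong A_{F_1}\otimes_K A_{F_2}$ completes the argument. The main obstacle I anticipate is purely bookkeeping: one must make sure the tensor decomposition is set up cleanly (i.e.\ that $\Ann_R(F) = \Ann_{R_1}(F_1)R + \Ann_{R_2}(F_2)R$) and that the variant of the tensor-product theorem being applied really delivers the SLP (not just the WLP). Neither of these is substantive in characteristic zero, and once they are recorded the proof is essentially immediate.
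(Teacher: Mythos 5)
Your proposal is correct and follows essentially the same route as the paper's proof: factor $F$ into the codimension-two piece $X_1^{c_1}X_2^{c_2}(X_1^m-X_2^m)$ and the monomial piece $X_3^{c_3}\cdots X_n^{c_n}$, identify $A_F$ with the tensor product of the two corresponding AG algebras, verify SLP for each factor (monomial complete intersection via Stanley--Watanabe, codimension-two AG algebra via the structure of height-two Gorenstein ideals), and conclude by the theorem that tensor products preserve SLP in characteristic zero. No gaps.
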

\begin{proof}
    For $$G_1=X_3^{c_3}\cdots X_n^{c_n}\qand G_2=X_1^{c_1}X_2^{c_2}(X_1^m-X_2^m) $$ consider the AG algebras
    $$A_{G_1}=K[x_3,\dots ,x_n]/\Ann(G_1) \qand A_{G_2}=K[x_1,x_2]/\Ann(G_2).$$
    We have $F =G_1G_2$ and therefore $A_F=A_{G_1}\otimes_K A_{G_2}$ using  the fact that $G_1$ and $G_2$
    are polynomials in disjoint sets of variables and \cite[Proposition 3.77]{LefscetzBook}.
    Now the SLP of $A_F$ follows from \cite[Theorem 3.10]{HW} and that $A_{G_1}$
    and $A_{G_2}$ have the SLP. Note that $A_{G_1}$ is a monomial complete intersection and, thus, it has the SLP in characteristic zero by \cite{Stanley}, and $A_{G_2}$ is an AG algebra of codimension two, and hence satisfies SLP in view of the fact that all codimension two Artinian algebras satisfy this property in characteristic zero; see \cite[Proposition 3.25]{HW}.
\end{proof}

Using the main result of our work \cite{ADFMMSV}, we can generalize the last result and prove:

\begin{proposition}\label{family5}
If $K$ is a field of characteristic zero, the algebra $A_F$ having the binomial Macaulay dual generator $$F=X_1^{c_1}X_2^{c_2}X_3^{c_3}\cdots X_n^{c_n}(m_1-m_2) $$
where $m_1$ and $m_2$ are monomials of the same degree in the variables $X_{i_1}$, $X_{i_2}$ and $X_{i_3}$, has the SLP.
\end{proposition}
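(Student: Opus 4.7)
The plan is to mirror the tensor product argument of \cref{family3}, replacing the codimension-two factor with a codimension-three factor whose SLP is guaranteed by the companion paper.

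First I would use the freedom to relabel variables to assume $\{i_1,i_2,i_3\}\subseteq\{1,2,3\}$. Then I would factor the Macaulay dual generator as $F=G_1\cdot G_2$, where
\[
G_1=X_4^{c_4}X_5^{c_5}\cdots X_n^{c_n}\qand G_2=X_1^{c_1}X_2^{c_2}X_3^{c_3}(m_1-m_2).
\]
Since $G_1$ and $G_2$ are polynomials in disjoint sets of variables, \cite[Proposition 3.77]{LefscetzBook} yields the tensor product decomposition $A_F\cong A_{G_1}\otimes_K A_{G_2}$.

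Next I would observe that $A_{G_1}$ is a monomial complete intersection, so it satisfies the SLP in characteristic zero by \cite{Stanley}. The factor $A_{G_2}$ is a standard graded AG algebra of codimension at most three whose Macaulay dual generator is a binomial (of the form prescribed by the hypothesis). This is precisely the setting treated in the companion paper \cite{ADFMMSV}, where it is shown that every codimension-three AG algebra with a binomial Macaulay dual generator has the SLP; degenerate sub-cases (when some of $c_1,c_2,c_3$ vanish or when $m_1-m_2$ effectively involves fewer than three variables) reduce $A_{G_2}$ to a codimension at most two AG algebra, which again has the SLP.

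Finally, invoking \cite[Theorem 3.10]{HW}, the tensor product over a characteristic zero field of two AG algebras with the SLP again has the SLP. Hence $A_F$ has the SLP, as claimed. The only real subtlety in this argument is ensuring that $A_{G_2}$ genuinely falls under the scope of \cite{ADFMMSV}, but since that result covers all codimension-three binomial cases (and lower codimensions are classical), no extra work is needed.
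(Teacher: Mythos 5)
Your proposal is correct and follows essentially the same route as the paper's own proof: relabel so the binomial factor lives in $X_1,X_2,X_3$, split $F=G_1G_2$ into factors in disjoint variable sets, apply \cite[Proposition 3.77]{LefscetzBook} to get $A_F\cong A_{G_1}\otimes_K A_{G_2}$, and combine \cite{Stanley} for the monomial complete intersection factor with the codimension-three result of \cite{ADFMMSV} via \cite[Theorem 3.10]{HW}. Your extra remark about degenerate subcases is a harmless (and reasonable) addition not spelled out in the paper.
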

\begin{proof} Without lose of generality, we can assume that $(i_1,i_2,i_3)=(1,2,3)$.
    For $$G_1=X_4^{c_4}\cdots X_n^{c_n}\qand G_2=X_1^{c_1}X_2^{c_2}X_3^{c_3}(m_1-m_2) $$ consider the AG algebras
    $$A_{G_1}=K[x_4,\dots ,x_n]/\Ann(G_1) \qand A_{G_2}=K[x_1,x_2,x_3]/\Ann(G_2).$$
    We have $F =G_1G_2$ and therefore $A_F=A_{G_1}\otimes_K A_{G_2}$ using  the fact that $G_1$ and $G_2$
    are polynomials in disjoint sets of variables and \cite[Proposition 3.77]{LefscetzBook}.
    Now the SLP of $A_F$ follows from \cite[Theorem 3.10]{HW} and that $A_{G_1}$
    and $A_{G_2}$ have the SLP. Note that $A_{G_1}$ is a monomial complete intersection and, thus, it has the SLP in characteristic zero by \cite{Stanley} while  $A_{G_2}$ is an AG algebra of codimension 3 with Macaulay dual generator a binomial, and hence by \cite[Theorem 3.45]{ADFMMSV} it satisfies SLP.
\end{proof}

\section{Open problems}\label{s: open}

In this last section we would like to formulate the open problems appearing in the introduction in the concrete case of AG algebras having binomial Macaulay dual generator. 

\begin{Oprobl}
Fix an integer $n\ge 3$ and consider a binomial $$
F=X_1^{a_1}\cdots X_n^{a_n}(X_1^{b_1}\cdots X_r^{b_r}-X_{r+1}^{b_{r+1}}\cdots X_n^{b_n}), \quad 1\le r\le n-1.$$
We ask:
\begin{enumerate}
    \item To determine in terms of $a_i$ and $b_j$ when $A_F$ is a complete intersection,
    \item To determine in terms of $a_i$ and $b_j$ when $A_F$ has WLP/SLP,
    \item To determine bounds for the Sperner number $S_{A_F}$ of $A_F$ and for $NS_{A_F}$,
    \item To determine an upper bound for the minimal number of generators of $A_F$,
     \item  To determine in terms of $a_i$ and $b_j$ when $A_F$ is a doubling,
    \item To determine a minimal free resolution of $A_F$.
\end{enumerate}
    
\end{Oprobl}

In \cite{ADFMMSV} the authors solved all the above problems in the codimension 3 case ($n=3$), while for arbitrary codimenson the problems are largely open, although partial results to some of them are given in previous sections of this paper.

\bibliographystyle{alpha} %
\bibliography{Proceedings}

\newcommand{\etalchar}[1]{$^{#1}$}
\begin{thebibliography}{HMNW03}

\bibitem[ADF{\etalchar{+}}25]{ADFMMSV}
Nasrin Altafi, Rodica Dinu, Sara Faridi, Shreedevi Masuti, Rosa~M. Miro-Roig,
  Alexandra Seceleanu, and Nelly Villamizar.
\newblock Artinian {G}orenstein algebras with binomial {M}acaulay dual
  generator.
\newblock {\em Preprint}, 2025.

\bibitem[GZ19]{GZ}
Rodrigo Gondim and Giuseppe Zappal\`a.
\newblock On mixed {H}essians and the {L}efschetz properties.
\newblock {\em J. Pure Appl. Algebra}, 223(10):4268--4282, 2019.

\bibitem[HMM{\etalchar{+}}13]{LefscetzBook}
Tadahito Harima, Toshiaki Maeno, Hideaki Morita, Yasuhide Numata, Akihito
  Wachi, and Junzo Watanabe.
\newblock {\em The {L}efschetz properties}, volume 2080 of {\em Lecture Notes
  in Mathematics}.
\newblock Springer, Heidelberg, 2013.

\bibitem[HMNW03]{HMNW}
Tadahito Harima, Juan~C. Migliore, Uwe Nagel, and Junzo Watanabe.
\newblock The weak and strong {L}efschetz properties for {A}rtinian
  {$K$}-algebras.
\newblock {\em J. Algebra}, 262(1):99--126, 2003.

\bibitem[HW07]{HW}
Tadahito Harima and Junzo Watanabe.
\newblock The strong {L}efschetz property for {A}rtinian algebras with
  non-standard grading.
\newblock {\em J. Algebra}, 311(2):511--537, 2007.

\bibitem[IK99]{IK}
Anthony Iarrobino and Vassil Kanev.
\newblock {\em Power sums, {G}orenstein algebras, and determinantal loci},
  volume 1721 of {\em Lecture Notes in Mathematics}.
\newblock Springer-Verlag, Berlin, 1999.
\newblock Appendix C by Iarrobino and Steven L. Kleiman.

\bibitem[IMS22]{IMS}
Anthony Iarrobino, Chris McDaniel, and Alexandra Seceleanu.
\newblock Connected sums of graded {A}rtinian {G}orenstein algebras and
  {L}efschetz properties.
\newblock {\em J. Pure Appl. Algebra}, 226(1):Paper No. 106787, 52, 2022.

\bibitem[JMR24]{JMR}
Martina Juhnke and Rosa~M. Mir{\'o}-Roig.
\newblock List of problems.
\newblock In Uwe Nagel, Karim Adiprasito, Roberta Di~Gennaro, Sara Faridi, and
  Satoshi Murai, editors, {\em Lefschetz Properties}, pages 211--225,
  Singapore, 2024. Springer Nature Singapore.

\bibitem[KU92]{KU}
Andrew~R. Kustin and Bernd Ulrich.
\newblock If the socle fits.
\newblock {\em J. Algebra}, 147(1):63--80, 1992.

\bibitem[MMRN11]{MMN}
Juan~C. Migliore, Rosa~M. Mir\'{o}-Roig, and Uwe Nagel.
\newblock Monomial ideals, almost complete intersections and the weak lefschetz
  property.
\newblock {\em Trans AMS}, 363:229 -- 257, 2011.

\bibitem[MN13]{MNtour}
Juan Migliore and Uwe Nagel.
\newblock Survey article: a tour of the weak and strong {L}efschetz properties.
\newblock {\em J. Commut. Algebra}, 5(3):329--358, 2013.

\bibitem[MW09]{MW}
Toshiaki Maeno and Junzo Watanabe.
\newblock Lefschetz elements of {A}rtinian {G}orenstein algebras and {H}essians
  of homogeneous polynomials.
\newblock {\em Illinois J. Math.}, 53(2):591--603, 2009.

\bibitem[Sta78]{Stanley}
Richard~P. Stanley.
\newblock Hilbert functions of graded algebras.
\newblock {\em Advances in Math.}, 28(1):57--83, 1978.

\bibitem[Sta80]{stanley2}
Richard~P. Stanley.
\newblock Weyl groups, the hard {L}efschetz theorem, and the {S}perner
  property.
\newblock {\em SIAM J. Algebraic Discrete Methods}, 1(2):168--184, 1980.

\bibitem[Wat87]{watanabe}
Junzo Watanabe.
\newblock The {D}ilworth number of {A}rtinian rings and finite posets with rank
  function.
\newblock In {\em Commutative algebra and combinatorics ({K}yoto, 1985)},
  volume~11 of {\em Adv. Stud. Pure Math.}, pages 303--312. North-Holland,
  Amsterdam, 1987.

\bibitem[Wat00]{w1}
Junzo Watanabe.
\newblock A remark on the hessian of homogeneous polynomials.
\newblock In {\em The Curves Seminar at Queen's}, volume XIII, pages 171--178,
  2000.

\bibitem[Wie04]{Wiebe}
Attila Wiebe.
\newblock The {L}efschetz property for componentwise linear ideals and
  {G}otzmann ideals.
\newblock {\em Comm. Algebra}, 32(12):4601--4611, 2004.

\end{thebibliography}


\begin{thebibliography}{99}
\bibitem{A} N. Abdallah, N. Altafi, P. De Poi, L. Fiorindo, A. Iarrobino, P. Macias Marques, E. Mezzetti, R.M. Mir\' o-Roig, L. Nicklasson, {\em Hilbert functions and Jordan type of Perazzo Artinian algebras}, preprint 2023.

\bibitem{ADGGMNSW} N. Altafi, R. Di Genaro, F. Galetto, S. Grate, R.M. Mir\'o-Roig, U. Nagel, A. Seceleanu and J. Watanabe, {\em Betti numbers for connected sums of graded Gorenstein Artinian algebras}, Preprint 2023.

\bibitem{BH93}
W.~Bruns and J.~Herzog, \emph{Cohen-{M}acaulay rings}, Cambridge Studies in Advanced Mathematics, vol.~39, Cambridge University Press (1993), Cambridge.

\bibitem{CLW} E. Celikbas, J. Laxmi and J. Weyman, {\em Embeddings of canonical modules and resolutions of connected sums
}
\bibitem{FMMR22}
L.~Fiorindo, E.~Mezzetti, and R.~M. Miró-Roig, \emph{Perazzo $3$-folds and the weak {L}efschetz property}, Journal of Algebra \textbf{626} (2023), 56--81.
\bibitem{Go}
R.~Gondim, \emph{On higher {H}essians and the {L}efschetz properties}, J.  Algebra \textbf{489} (2017), 241--263.
\bibitem{IMS} A. Iarrobino, C. Mc Daniel, A.Seceleanu, {\em Connected Sums of Graded Artinian Gorenstein
Algebras and Lefschetz Properties}, J.
Pure Appl. Algebra, \textbf{226}(2022) no.1, paper No. 106787, 52 pp.

\bibitem{JMR}  Martina Juhnke-Kubitzke and Rosa M. Mir\'o-Roig, {\em List of problems}, 


\bibitem{KKRSSY} G. Kapustka, M. Kapustka, K. Ranestad, H. Schenck, M. Stillman, and B. Yuan. (2021). {\em Quaternary quartic forms and Gorenstein rings. } https://doi.org/10.48550/arXiv.2111.05817
\bibitem{KMMNP}
J.\ Kleppe, J. \ Migliore, R.~M. \ Mir\' o-Roig, U.\ Nagel and  C.\ Peterson, {\em Gorenstein liaison, complete intersection
liaison invariants and unobstructedness}, Mem.\ Amer.\ Math.\
  Soc.\ {\bf 154} (2001), no.\ 732.

\bibitem{MMN}  J. \ Migliore, R.~M. \ Mir\' o-Roig and U.\ Nagel, {\em Monomial ideals, almost complete intersections and the weak Lefschetz property}, Trans AMS {\bf 363} (2011), 229 - 257.

\bibitem{MW} T. Maeno, J. Watanabe, {\em Lefschetz elements of Artinian Gorenstein algebras and Hessians of
homogeneous polynomials}, Illinois J. Math. {\bf 53} (2009), 593--603.
\bibitem{Pe}
U.~Perazzo, \emph{Sulle varietà cubiche la cui hessiana svanisce  identicamente}, G. Mat. Battaglini \textbf{38} (1900), 337--354.


\bibitem{stanley2} R.\ Stanley, {\em Weyl groups, the hard Lefschetz
theorem, and the Sperner property}, SIAM J.\ Algebraic Discrete Methods
{\bf 1} (1980), 168--184.



\bibitem{watanabe} J.\ Watanabe, {\em The Dilworth number of Artinian
rings and finite posets with rank function}, Commutative Algebra and
Combinatorics, Advanced Studies in Pure Math.\ Vol.\ 11, Kinokuniya Co.\
North Holland, Amsterdam (1987), 303--312.

\bibitem{w1} J.\ Watanabe, {\em  A remark on the Hessian of homogeneous polynomials},
in: The Curves Seminar at Queen's, Volume XIII, Queen's Papers in Pure and Appl. Math. {\bf 119} (2000), 171--178.
\end{thebibliography}

\end{document}